\newcommand{\R}{{\mathbb R}}
\newcommand{\pa}{\partial}
\newcommand{\na}{\nabla}
\newcommand{\T}{\mathbb T}
\newcommand{\di}{\nabla\cdot}
\newcommand{\eps}{\varepsilon}
\newcommand{\ue}{\uu^{\eps}}
\newcommand{\V}{\mathcal V}
\newcommand{\ox}{\omega}
\newcommand{\vre}{\vr^{\eps}}
\newcommand{\Tintt}{\int_\tau^t\int_{\mathbb T^d}}
\newcommand{\Bintt}{\int_{\tau}^t\int_{\Omega_{\eps_2}}}
\newcommand{\Binttt}{\frac{1}{\eps_3}\int_{\eps_1}^{\eps_1+\eps_3}\Bintt}
\newcommand{\BintttB}{\frac{1}{\eps_3}\int_{\eps_1}^{\eps_1+\eps_3}\int_0^t\int_{\pa\Omega_{\eps_2}}}
\newcommand{\dxte}{dxdsd\eps_2}
\newcommand{\dHte}{d\mathcal{H}^{d-1}(\theta)dtd\eps_2}
\newtheorem{theorem}{Theorem}[section]
\newtheorem{lemma}[theorem]{Lemma}
\newtheorem{remark}[theorem]{Remark}
\newtheorem{definition}{Definition}[section]
\newcommand{\vr}{\varrho}
\newcommand{\uu}{\mathbf{u}}
\newcommand{\supp}{\mathrm{supp}}
\newcommand{\qeda}{\hspace{10mm}\hfill $\square$}
\begin{document}

\title[Energy conservation for Euler equations]{Energy conservation for inhomogeneous incompressible and compressible Euler equations}

\author[Q-H. Nguyen]{Quoc-Hung Nguyen}
\author[P-T. Nguyen]{Phuoc-Tai Nguyen}
\author[B. Q. Tang]{Bao Quoc Tang}

\address{Quoc-Hung Nguyen \hfill\break
	Scuola Normale Superiore, Piazza dei Cavalieri 7, 56100 Pisa, Italy;\hfill\break  Department of Mathematics, New York University Abu Dhabi, Abu Dhabi, United Arab Emirates}
\email{quochung.nguyen@sns.it,qn2@nyu.edu} 

\address{Phuoc-Tai Nguyen \hfill\break
	Department of Mathematics and Statistics, Masaryk University, 61137 Brno, Czech Republic}
\email{ptnguyen@math.muni.cz, nguyenphuoctai.hcmup@gmail.com} 

\address{Bao Quoc Tang \hfill\break
	Institute of Mathematics and Scientific Computing, University of Graz, Heinrichstrasse 36, 8010 Graz, Austria}
\email{quoc.tang@uni-graz.at} 

\date{\today}

\thanks{}

\begin{abstract}
	Energy conservations are studied for inhomogeneous incompressible and compressible Euler equations with general pressure law in a torus or a bounded domain. We provide sufficient conditions for a weak solution to conserve the energy. By exploiting a suitable test function, the spatial regularity for the density is only required to be of order $2/3$ in the incompressible case, and of order $1/3$ in the compressible case. When the density is constant, we recover the existing results for classical incompressible Euler equation. 
%
\end{abstract}

\keywords{Inhomogeneous incompressible Euler equation; Compressible isentropic Euler equation; Energy conservation; Onsager's conjecture}

\subjclass[2010]{35Q31, 76B03}

\maketitle

\tableofcontents


\section{Introduction and Main results}
Let $\Omega$ be either $\T^d$ or a bounded and connected domain  in $\R^d$ with $C^2$ boundary $\partial\Omega$, with $d\geq 2$. This paper studies the conservation of energy for weak solutions to the \textit{inhomogeneous incompressible Euler equation} 
\begin{equation}\label{E}\tag{E}
	\begin{cases}
		\partial_t \vr + \di(\vr \uu) = 0, &\text{ in } \Omega \times (0,T),\\
		\partial_t (\vr \uu) + \di(\vr \uu\otimes \uu) + \na P = 0, &\text{ in } \Omega \times (0,T),\\
		\di\uu = 0, &\text{ in } \Omega \times (0,T),\\
		\uu(x,t)\cdot n(x) = 0, &\text{ on } \pa\Omega \times (0,T),\\
		(\vr\uu)(x,0)  = \vr_0(x)\uu_0(x), &\text{ in } \Omega,\\
		\vr(x,0) = \vr_0(x), &\text{ in } \Omega,
	\end{cases}
\end{equation}
as well as the \textit{compressible Euler equation with general pressure law}
\begin{equation}\label{Ec}\tag{Ec}
\begin{cases}
\partial_t \vr + \di(\vr \uu) = 0, &\text{ in } \Omega \times (0,T),\\
\partial_t (\vr \uu) + \di(\vr \uu\otimes \uu) + \na p(\vr) = 0, &\text{ in } \Omega \times (0,T),\\
\uu(x,t)\cdot n(x) = 0, &\text{ on } \pa\Omega \times (0,T),\\
(\vr\uu)(x,0)  = \vr_0(x)\uu_0(x), &\text{ in } \Omega,\\
\vr(x,0) = \vr_0(x), &\text{ in } \Omega,
\end{cases}
\end{equation}
where $T>0$ is the time horizon, $n(x)$ denotes the outward unit normal vector field to the boundary $\partial \Omega$\footnote{Naturally, the boundary condition $\uu \cdot n = 0$ is neglected when $\Omega = \T^d$.}, $\varrho: \Omega \times (0,T) \to \R_+$ is the scalar density of a fluid, and  $\uu: \Omega \times (0,T) \to \R^d$ denotes its velocity. In system \eqref{E}, $P: \Omega \times (0,T) \to \R$ stands for the scalar pressure, while the general pressure law $p: [0,\infty) \to \mathbb R$ in \eqref{Ec} satisfies some conditions, which will be specified later. Note that when $p(\vr) = \vr^\gamma$ for some $\gamma>1$, \eqref{Ec} is the well-known isentropic compressible Euler equation. 

\medskip
In his celebrated paper \cite{Ons49} Onsager conjectured that there is dissipation of energy for homogeneous Euler equation (namely $\varrho \equiv 1$ in \eqref{E}) for weak solutions with low regularity. More precisely, if a weak solution is in $C^{\alpha}$ for $\alpha > 1/3$ then the energy is conserved while 
the energy is dissipated if $\alpha < 1/3$. The first landmark result concerning the loss (or gain) of energy is due to Scheffer \cite{Sch93} in which he proved the existence of a weak solution having compact support both in time and space. This was later also recovered by Shnirelman \cite{Shn97} for the torus $\mathbb T^d$. This direction of research has been greatly pushed forward by a series of works of De Lellis and Sz\'ekelyhidi in e.g. \cite{DS12,DS13,DS14,BDIS}. The Onsager's conjecture has been recently settled by Isett in \cite{Ise18a,Ise18}. 
The other direction, i.e. the conservation of energy, was first proved by Constant-E-Titi \cite{CET94} for the torus $\mathbb T^3$. The case of bounded domains is studied only recently in \cite{BT18, BTW} in the context of H\"older spaces and in \cite{DN18, NN18} in the context of Besov spaces.

Much less works concerning the inhomogeneous incompressible Euler equation \eqref{E} and the compressible equation \eqref{Ec} have been published in the literature and so far only the case of a bounded domain with periodic boundary condition, namely ${\mathbb T}^d$, has been treated (see the recent papers  \cite{LS16,FGGW17, CY,ADSW}). More precisely, in \cite{FGGW17}, Feireisl \textit{et al.} provided sufficient conditions in terms of Besov regularity both in time and space of the density $\varrho$, the velocity $\uu$ and the momentum ${\bf m}=\varrho \uu$  to guarantee the conservation of the energy. Their method relies on the idea in \cite{CET94} and requires also regularity conditions on the pressure. Recently, by using a different approach, Chen and Yu \cite{CY} obtained the energy conservation, under a different set of regularity conditions. Their method has the advantage of dealing with vacuum. However, since the authors used the convolution both in time and space, additional regularity in time for the density $\vr$ is required.
%

\medskip
In this paper, we provide modest sufficient conditions for a weak solution to conserve the energy for both inhomogeneous incompressible Euler equation \eqref{E} and compressible Euler equation \eqref{Ec}. An attempt to avoid the time regularity of the density is to use the test function $\frac{1}{\vre}(\vr\uu)^\eps$ instead of $\ue$, where the convolution is {\it only taken in spatial variable} (see the definition of $\uu^\varepsilon$ and $\varrho^\varepsilon$ in the list of notations). This choice of test functions was used in \cite{LS16} for density-dependent Navier-Stokes equations and in \cite[Remark 4.2]{FGGW17} for Euler equations, and it allows to avoid any regularity in time for the density $\vr$. However, by way of compensation, one has to exclude vacuum, or at least assume that the inverse density is essentially bounded, i.e. $\vr^{-1}\in L^\infty(\Omega\times(0,T))$. In this work, we improve this by using the test function $\frac{\phi_{\eta,\eps}}{\vre}(\vr\uu)^\eps$ where $\phi_{\eta,\eps}$ is the cut-off function which vanishes when $\vre \leq \eta$ (see the definition of $\phi_{\eta,\varepsilon}$ in \eqref{cutoff}). This helps us to weaken the condition $\vr^{-1}\in L^\infty(\Omega\times(0,T))$ to $\vr^{-1} \in L^{r_1}(\Omega\times(0,T))$ for some $r_1>0$, with the price that the density needs to be smoother near vacuum (see e.g. Theorem \ref{torus}). Naturally, in case of no vacuum, these additional regularity assumptions are not any more necessary. Moreover, we are able to reduce the spatial smoothness required for the density. More precisely, $\vr \in L^\infty(0,T;B^{\alpha,\infty}_{\infty}(\T^d))$ (with $\alpha = \frac 23$ for \eqref{E} and $\alpha = \frac 13$ for \eqref{Ec}) instead of $\vr \in L^p(0,T;W^{1,p}(\T^d))$ as in \cite{CY}.
Our techniques are also suitable to deal with both the case of a torus or a bounded domain. \smallskip

To state our results, we need first the definition of weak solutions for \eqref{E} and \eqref{Ec}.
 
\begin{definition} \label{solution} A triple $(\varrho,\uu, P)$ is called a weak solution to \eqref{E} if 

(i) \begin{align*}
\int_0^T \int_{\Omega} (\varrho \partial_t \varphi  + \varrho \uu \nabla \varphi ) dxdt = 0 
\end{align*}
for every test function $\varphi \in C_0^\infty(\Omega \times (0,T))$.

(ii) \begin{align}\label{weak}
\int_0^T \int_{\Omega} (\varrho \uu \cdot \partial_t \psi + \varrho \uu \otimes \uu : \nabla \psi + P \nabla \cdot \psi) dxdt = 0 
\end{align}
for every test vector field $\psi \in C_0^\infty(\Omega \times (0,T))^d$.	

(iii) $\varrho(\cdot,t) \rightharpoonup \varrho_0$ in ${\mathcal D}'(\Omega)$ as $t \to 0$, i.e. 
\begin{equation} \label{ini1}
\lim_{t \to 0}\int_{\Omega} \varrho(x,t) \varphi(x)dx =  \int_{\Omega} \varrho_0(x) \varphi(x)dx 
\end{equation}
for every test function $\varphi \in C_0^\infty(\Omega)$. 

(iv) $(\varrho \uu)(\cdot,t) \rightharpoonup \varrho_0 \uu_0$ in ${\mathcal D}'(\Omega)$ as $t \to 0$, i.e. 
\begin{equation} \label{ini2}
\lim_{t \to 0}\int_{\Omega}  (\varrho \uu)(x,t) \psi(x)dx =  \int_{\Omega} (\varrho_0 \uu_0)(x) \psi(x)dx 
\end{equation}
for every test vector field $\psi \in C_0^\infty(\Omega)^d$. 

\medskip
A weak solution for \eqref{Ec} can be defined in the same way where in \eqref{weak}, the scalar pressure $P$ is replaced by the general pressure law $p(\vr)$.
\end{definition} 
For $\beta>0$, $\delta>0$ and $p \geq 1$, we define, in the case of the torus $\T^d$, the quantity\footnote{Clearly here is a slight abuse of notation since the definition is not a norm but only a seminorm.}
\begin{align*}
\|f\|_{\mathcal{V}^{\beta,p}_{\delta}(\mathbb{T}^d)}:=\sup_{|h|<\delta} |h|^{-\beta}\|f(\cdot+h)-f(\cdot)\|_{L^p(\mathbb{T}^d)},
\end{align*}
and in the case of a bounded domain $\Omega$ the quantity
\begin{align*}
\|f\|_{\mathcal{V}^{\beta,p}_{\delta}(K)}:=\sup_{|h|<\delta} |h|^{-\beta}\|f(\cdot+h)-f(\cdot)\|_{L^p(K)},
\end{align*} 
for any $K\subset\subset \Omega$ with $d(K,\pa\Omega) > 2\delta$. For $T>0$ we denote by
\begin{equation*}
	\|f\|_{L^q(0,T;\V_{\delta}^{\beta,p}(\T^d))} = \left(\int_0^T\|f(t)\|_{\V_{\delta}^{\beta,p}(\T^d)}^qdt\right)^{1/q}, \quad \text{ for } 1\leq q<\infty,
\end{equation*}
\begin{equation*}
	\|f\|_{L^\infty(0,T;\V_{\delta}^{\beta,p}(\T^d))} = \underset{t\in (0,T)}{\mathrm{ess}\sup}\,\|f(t)\|_{\V_{\delta}^{\beta,p}(\T^d)},
\end{equation*}
and similarly for $\|f\|_{L^q(0,T;\V_{\delta}^{\beta,p}(K))}$ with $K \subset\subset \Omega$.

For simplicity, we denote by $Q_T = \Omega\times (0,T)$ both for $\Omega$ to be $\T^d$ or a bounded domain with boundary, as it should not lead to any confusion. 
\medskip

Our first main result reads as follows.
\begin{theorem}[Conservation of energy for \eqref{E} in a torus]\label{torus}
	Let $(\vr, \uu, P)$ be a weak solution to \eqref{E} in the case $\Omega = \T^d$. Assume that 
	\begin{equation}  \label{Tu}
	\uu \in L^3(Q_T) \cap L^3(0,T;\V_{\delta_0}^{\frac{1}{3},3}(\T^d)), \quad \limsup_{\delta\to 0}\|\uu\|_{L^3(0,T;\V_{\delta}^{\frac{1}{3},3}(\mathbb{T}^d))}= 0, \quad P \in L^{\frac{3}{2}}(Q_T), 
	\end{equation}
	\begin{equation}\label{Trho1}
	0 \leq	\vr \in L^\infty(Q_T) \cap  L^\infty(0,T;\V_{\delta_0}^{\frac{2}{3},\infty}(\T^d)),
	\end{equation}
	\begin{align} \label{Trho2}
	\varrho^{-1} \in L^{r_1}(Q_T) \quad \text{and} \quad {\bf 1}_{ \{ \varrho \leq \delta_0\} }\partial_t \varrho   \in L^{r_2}(Q_T),
	\end{align}	
	for some $\delta_0>0$ and $r_1>0, r_2 \geq 3$ such that $\frac{1}{r_1} + \frac{1}{r_2} = \frac{1}{3}$.

	Then the energy for \eqref{E} conserves for all time, i.e.
	\begin{equation} \label{conservation1}
		\int_{\Omega}(\vr|\uu|^2)(x,t)dx = \int_{\Omega}(\vr_0|\uu_0|^2)(x) dx \quad  \quad \forall t\in (0,T).
	\end{equation}
\end{theorem}
\begin{remark}\label{remark1}\hfill
\begin{itemize}
	\item In the case of no vacuum, i.e. $\vr(x,t) \geq \underline{\vr} > 0$, then \eqref{Trho2} is obviously satisfied by choosing $\delta_0 < \underline{\vr}$, and therefore we require only the assumption \eqref{Trho1} on the density.
	
	
	\item Though we allow the density $\vr$ to be zero at some places, the condition $\|\varrho^{-1}\|_{L^{r_1}(Q_T)} <+\infty$ in particular prevents $\vr$ to be zero in a set with positive measure. Consequently, we have the following useful limit
	\begin{equation}\label{zero_level}
		\limsup_{\eta\to 0}|\{(x,t)\in Q_T:\, \vr(x,t) \leq \eta\}| = 0.
	\end{equation}
	\item It is easy to check that if $f\in W^{\frac 13,3}(\T^d)$, then $f\in\V_{\delta}^{\frac{1}{3},3}(\T^d)$ for all $\delta>0$ and $$\limsup_{\delta\to 0}\|f\|_{\V_{\delta}^{\frac{1}{3},3}(\mathbb{T}^d)}= 0.$$
	Thus, condition (4) holds if $\uu\in L^3(0,T;W^{\frac 13,3}(\T^d))$ and $P\in L^{\frac 32}(Q_T)$.
	\item When $\vr \equiv {\normalfont const}$, we recover the classical assumption $\uu \in L^3(0,T;W^{\frac 13,3}(\T^d))$ for the homogeneous incompressible Euler equation (see e.g. \cite{CET94}). Note that the condition $P\in L^{\frac 32}(Q_T)$ is only needed when the density is not constant.
\end{itemize}
\end{remark}
Our results in Theorem \ref{torus} improve that of \cite{CY} where the authors assume in particular $\vr \in L^p(0,T;W^{1,p}(\T^d))$ and $\uu \in L^q(0,T;B_q^{\alpha,\infty}(\T^d))$ with $\frac 1p + \frac 3q \leq 1$ and $\alpha > \frac 13$. Here we are able to reduce the spatial regularity of the density $\vr$ to the order $\frac 23$ and keep the original order $\frac 13$ of the velocity. When $\|\uu\|_{L^3(0,T;\V_\delta^{\alpha,3}(\T^d))} < \infty$ for some $\alpha > \frac 13$, the limit condition in \eqref{Tu} is automatically satisfied. It's remarked that \cite{CY} also studies the case where $\vr$ only belongs to $L^\infty((0,T)\times \T^d)$ (where vacuum is allowed to happen), but they need to impose Besov regularity for the velocity both in time and space $\uu \in B_p^{\beta,\infty}(0,T;B_q^{\alpha,\infty}(\T^d))$ with $2\alpha + \beta > 1$ and $\alpha + 2\beta > 1$.
	
	\medskip
	In comparison to \cite{LS16} or \cite[Remark 4.2]{FGGW17}, where the author also used the test function $\frac{1}{\vre}(\vr\uu)^\eps$, we are able to allow vacuum, though not "too much", i.e. $\vr^{-1}\in L^{r_1}(Q_T)$, by choosing the test function $\frac{\phi_{\eta,\eps}}{\vre}(\vr\uu)^\eps$ where the cut-off $\phi_{\eta,\eps}$ vanishes when $\vre \leq \eta$. The price to pay is that $\partial_t \vr$ near vacuum needs to be $L^{r_2}$-integrable. An integrability of the inverse of the density is also imposed in the recent work \cite{ADSW}, in which the authors again require time regularity of both the density and the velocity.
	
	\medskip
For the case of a bounded domain with smooth boundary $\Omega$, we need additionally some behavior of $\uu$ and $P$ near the boundary.  In the sequel, $\Omega_r:=\left\{x\in\Omega:d(x,\partial\Omega)>r\right\}$ for any $r\geq 0$ and $\fint_{E}fdx:=\frac{1}{\mathcal{L}^d(E)}\int_E
fdx$ for any Borel set $E\subset\mathbb{R}^d$. Since  $\Omega$  is a bounded, connected domain with $C^2$ boundary,  we find $r_0>0$ and a unique $C_b^1$-vector function $n:\Omega\backslash\Omega_{r_0}\to S^{d-1}$  such that the following holds true: for any $r\in [0,r_0)$, $x\in \Omega_{r}\backslash \Omega_{r_0}$ there exists  a unique $x_r\in\partial\Omega_{r}$  such that  $d(x,\partial \Omega_{r})=|x-x_r|$ and $n(x)$ is the outward  unit normal vector field to the boundary $\partial\Omega_r$ at $x_r$.  
\begin{theorem}[Conservation of energy for \eqref{E} in a bounded domain]\label{bounded}
	Let $\Omega\subset\mathbb R^d$ be a bounded domain with $C^2$ boundary $\pa\Omega$.  Let $(\vr, \uu, P)$ be a weak solution to \eqref{E}. Assume that 
	\begin{equation}\label{Trho'}
0\leq	\vr \in L^\infty(Q_T),\quad \uu \in L^3(Q_T),\quad 	P \in L^{\frac{3}{2}}(Q_T),
	\end{equation}
	\begin{equation}\label{Tu'}
 \|\varrho\|_{L^\infty(0,T;\V_{\delta}^{\frac{2}{3},\infty}(\Omega_{2\delta}))}+\|\uu\|_{L^3(0,T;\V_{\delta}^{\frac{1}{3},3}(\Omega_{2\delta}))}<\infty, \quad \limsup_{\varepsilon\to 0}\|\uu\|_{L^3(0,T;\V_{\varepsilon}^{\frac{1}{3},3}(\Omega_{\delta}))} = 0\quad\forall\delta>0,
	\end{equation}
	\begin{equation}\label{vrbd}
		\vr^{-1}\in L^{r_1}(Q_T) \quad \text{ and } \quad  \mathbf{1}_{\{\vr\leq \delta_0\}}\partial_t \vr  \in L^{r_2}(Q_T),
	\end{equation}
	for some $\delta_0>0$ and $r_1, r_2>0$ such that $\frac{1}{r_1} + \frac{1}{r_2} = \frac 13$. Moreover, we assume the following boundary layer conditions
	\begin{equation}\label{uboundary}
		\left(\int_0^T\fint_{\Omega\backslash\Omega_{\eps}}| \uu(x,t)|^{3}dxdt\right)^{\frac{2}{3}}\left(\int_0^T\fint_{\Omega\backslash\Omega_{\eps}}|\uu(x,t)\cdot n(x)|^{3}dxdt\right)^{\frac{1}{3}} = o(1) \quad\text{ as } \quad \eps \to 0,
	\end{equation}
	\begin{equation}\label{Pboundary}
		\left(\int_0^T\fint_{\Omega\backslash\Omega_{\eps}}|P(x,t)|^{\frac{3}{2}}dxdt\right)^{2/3}\left(\int_0^T\fint_{\Omega\backslash\Omega_{\eps}}|\uu(x,t)\cdot n(x)|^{3}dxdt\right)^{\frac{1}{3}} = o(1) \quad\text{ as } \quad \eps \to 0.
	\end{equation}	
	Then the energy for \eqref{E} conserves for all time, i.e.
	\begin{equation} \label{conservation2}
		\int_{\Omega}(\vr|\uu|^2)(x,t)dx = \int_{\Omega}(\vr_0|\uu_0|^2)(x)dx  \quad  \forall t\in (0,T).
	\end{equation}
\end{theorem}

\begin{remark}\hfill\\
\begin{itemize}
\item Note that for the case of bounded domain, we only require the density and velocity belong {\normalfont locally} to a Besov space.
\item When $\vr \equiv const$ we recover (and slightly improved) the recent results in \cite{NN18} with the remark that the integrability of the pressure $P$ in \eqref{Trho'} is only needed in the inhomogeneous incompressible case. 
\item Conditions \eqref{uboundary} and \eqref{Pboundary} can be replaced by the following conditions
$$  \limsup_{\varepsilon \to 0}\int_{0}^{T}\fint_{\Omega\backslash \Omega_\varepsilon}|\uu(x,t)|^3dxdt +  \limsup_{\varepsilon \to 0}\int_{0}^{T}\fint_{\Omega\backslash \Omega_\varepsilon}|P(x,t)|^{\frac{3}{2}}dxdt < \infty
$$ 
and 
$$  \liminf_{\varepsilon \to 0}\int_{0}^{T}\fint_{\Omega\backslash \Omega_\varepsilon}|\uu(x,t)\cdot n(x)|^3dxdt = 0. $$ \smallskip

\item As  \cite{NN18}, we put $v(x,t)=\uu(x,t) \cdot n(x)$ for any $x\in \Omega\backslash\Omega_{r_0}$. If the function 
$$\phi : \varepsilon \mapsto \phi(\varepsilon)=\| v \|_{L^3((\Omega\backslash\Omega_{\varepsilon}) \times (0,T) )}$$
satisfies $\phi(\varepsilon) \leq C\varepsilon^{2/3}$ for every $\varepsilon \in (0,r_0)$ with some  $C>0$, then conditions \eqref{uboundary} and \eqref{Pboundary} are fulfilled. \smallskip

\item The conditions in \eqref{Tu'} are satisfied if for each $\delta>0$, $\vr \in L^\infty(0,T;W^{\frac 13,\infty}(\Omega_{2\delta}))$ and $\uu \in L^3(0,T;W^{\frac 13,3}(\Omega_{2\delta}))$.
\end{itemize}
\end{remark}

We now turn to the energy conservation for the compressible Euler equation \eqref{Ec}. We will assume the following conditions of the pressure law $p: (0,\infty) \to \mathbb R$,
\begin{equation}\label{pressure_law}
	p\in C([0,\infty))\cap C^2((0,\infty)) \qquad \text{ and } \qquad 
		\limsup_{z \to 0}\frac{|p(z)| + |p_2(z)|}{|z|} < +\infty,
\end{equation}
	where
	\begin{equation}\label{p2}
		p_2(z) = z\int_1^z\frac{p(\ell)}{\ell^2}d\ell.
	\end{equation}
We remark that, in the isentropic case, i.e. $p(\vr)=\vr^\gamma$ for $\gamma>1$, the conditions in \eqref{pressure_law} are obviously satisfied.
\begin{theorem}[Conservation of energy for \eqref{Ec} in a torus]\label{toruscom}
	Let $(\vr, \uu)$ be a weak solution to \eqref{Ec} in the case $\Omega = \T^d$.  Assume that the pressure law satisfies \eqref{pressure_law}, and the density $\vr$ and the velocity satisfy
	\begin{equation}\label{Trhocom}
	0\leq \vr \in L^\infty(Q_T),\quad  \uu\in L^3(Q_T),
	\end{equation}
	\begin{equation}\label{Tucom}
	\|\varrho\|_{L^{\infty}(0,T;\V_{\delta_0}^{\frac 13,\infty}(\mathbb{T}^d))}+\|\uu\|_{L^3(0,T;\V_{\delta_0}^{\frac{1}{3},3}(\mathbb{T}^d))}<\infty, \quad \limsup_{\delta\to 0}(\|\uu\|_{L^3(0,T;\V_{\delta}^{\frac{1}{3},3}(\mathbb{T}^d))}+\|\varrho\|_{L^{\infty}(0,T;\V_{\delta}^{\frac 13,\infty}(\mathbb{T}^d))})= 0,
	\end{equation}
	\begin{equation}\label{assumption-1}
		\mathbf{1}_{\{\vr \leq \delta_0 \}}\partial_t\vr\in L^3(Q_T),\; \mathbf{1}_{\{\vr \leq \delta_0 \}}\na \vr\in L^\infty(Q_T) \;\; \text{ and } \quad \lim_{\eta\to 0}|\{(x,t)\in Q_T:\; \vr(x,t) \leq \eta\}| = 0,
	\end{equation}	
	for some $\delta_0>0$. 
Then the energy for \eqref{Ec} conserves for all time, i.e.
	\begin{align}\label{conserEc}
	\int_{\mathbb{T}^d}\left(\frac{1}{2}(\varrho|\uu|^2)(x,t)+p_2(\vr)(x,t)\right)dx= \int_{\mathbb{T}^d}\left(\frac{1}{2}(\varrho_0|\uu_0|^2)(x)+p_2(\vr_0)(x)\right)dx \quad \forall t\in (0,T). 
	\end{align}
\end{theorem}

\begin{remark}\label{remark2}
	The conditions \eqref{Trhocom} and \eqref{Tucom} are satisfied if
	\begin{equation*}
		\vr \in L^\infty(0,T;W^{\frac 13,\infty}(\T^d)) \quad \text{ and } \quad \uu \in L^3(0,T;W^{\frac 13,3}(\T^d)).
	\end{equation*}
\end{remark}

In comparison to the incompressible case in Theorem \ref{torus}, we require in \eqref{Tucom} the spatial regularity of $\vr$ to be \textit{only $\frac 13$ instead of $\frac 23$}. This is due to the fact that the general pressure $p$ depends on the density, which allows a finer analysis using smoothness of $\vr$ itself (see estimate \eqref{G1}). We are also able to eliminate any condition on $\vr^{-1}$, though we have to impose the spatial smoothness of $\vr$ near vacuum in \eqref{assumption-1}. Yet, we still need to assume that the vacuum set has zero measure. The result of Theorem \ref{toruscom} is applicable for a large class of pressure law satisfying \eqref{pressure_law}, which includes the isentropic case $p(\vr) = \vr^\gamma$, $\gamma>1$, as a special situation.

\medskip
We have the following parallel result for the case of bounded domains.
\begin{theorem}[Conservation of energy for \eqref{Ec} in a bounded domain]\label{boundedcom}
	Let $\Omega\subset\mathbb R^d$ be a bounded domain with $C^2$ boundary $\pa\Omega$.  Let $(\vr, \uu)$ be a weak solution to \eqref{Ec}. Assume that the pressure law satisfies \eqref{pressure_law}, and the density and the velocity satisfy
	\begin{equation}\label{Trho'com}
	0\leq \vr \in L^\infty(Q_T),\quad \uu \in L^3(Q_T),
	\end{equation}
	and for some $\delta_0>0$,
	\begin{equation}\label{Tu'com}
	 \|\varrho\|_{L^\infty(0,T;\V_{\delta}^{\frac 13,\infty}(\Omega_{2\delta}))}+	 \|\uu\|_{L^3(0,T;\V_{\delta}^{\frac{1}{3},3}(\Omega_{2\delta}))}<\infty, \quad \limsup_{\varepsilon\to 0}\|\uu\|_{L^3(0,T;\V_{\varepsilon}^{\frac{1}{3},3}(\Omega_{\delta}))} + \|\varrho\|_{L^\infty(0,T;\V_{\varepsilon}^{\frac 13,\infty}(\Omega_{\delta}))}= 0,
	\end{equation}
	for any $0<\delta<\delta_0$, and 
	\begin{equation}\label{assumption-2}
		\mathbf{1}_{\{\vr \leq \delta_0 \}}\partial_t\vr\in L^3(Q_T),\; \mathbf{1}_{\{\vr \leq \delta_0 \}}\na \vr\in L^\infty(Q_T) \;\; \text{ and } \quad \lim_{\eta\to 0}|\{(x,t)\in Q_T:\; \vr(x,t) \leq \eta\}| = 0.
	\end{equation}
	Moreover,
	\begin{equation}\label{uboundarycom}
	\left(\int_0^T\fint_{\Omega\backslash\Omega_{\eps}}| \uu(x,t)|^{3}dxdt\right)^{\frac{2}{3}}\left(\int_0^T\fint_{\Omega\backslash\Omega_{\eps}}|\uu(x,t)\cdot n(x)|^{3}dxdt\right)^{\frac{1}{3}} = o(1) \quad\text{ as } \quad \eps \to 0,
	\end{equation}
	\begin{equation}\label{Pboundarycom}\int_0^T\fint_{\Omega\backslash\Omega_{\eps}}|\uu(x,t)\cdot n(x)|dxdt= o(1) \quad\text{ as } \quad \eps \to 0.
	\end{equation}	
	Then the energy for \eqref{E} conserves for all time, i.e.
	\begin{align}\label{conserEcboun}
\int_{\Omega}\left(\frac{1}{2}(\varrho|\uu|^2)(x,t)+p_2(\vr)(x,t)\right)dx= \int_{\Omega}\left(\frac{1}{2}(\varrho_0|\uu_0|^2)(x)+p_2(\vr_0)(x)\right)dx \quad \forall t\in (0,T). 
\end{align}
\end{theorem}

\medskip
\textbf{The paper is organized as follows}: The proofs of Theorems \ref{torus}, \ref{bounded},  \ref{toruscom} and \ref{boundedcom} will be presented in the next four Sections respectively. In Appendix \ref{appendix}, we collect some useful estimates which will be used in the proofs.

\medskip
{\bf Notation:}
\begin{itemize}
	\item For convenience, we simply write $\|f\|_{L^p}$ for $\|f\|_{L^p(Q_T)}$ for $1\leq p \leq \infty$, recalling that $Q_T = \Omega\times (0,T)$ where either $\Omega = \T^d$ or $\Omega\subset \mathbb R^d$ is a bounded domain with smooth boundary.
	\item We denote by $C$ a generic constant, whose value can change from line to line or even the same line. Sometimes we write $C(\lambda)$ to emphasize the dependence on $\lambda > 0$.
	\item For simplicity, we write $\limsup\limits_{\varepsilon_n;...;\varepsilon_1 \to 0}$ for $\limsup\limits_{\varepsilon_n \to 0}...\limsup\limits_{\varepsilon_1 \to 0}$.
	\item Let $\omega: \mathbb R^d \to \mathbb R$ be a standard mollifier, i.e. $\omega(x) = c_0e^{-\frac{1}{1-|x|^2}}$ for $|x| < 1$ and $\omega(x) = 0$ for $|x| \geq 1$, where $c_0$ is a constant such that $\int_{\R^d}\omega(x)dx = 1$. For any $\eps > 0$, we define the rescaled mollifier $\omega_\varepsilon(x) = \eps^{-d}\omega(\frac x\eps)$. For any function $f \in L_{loc}^1(\Omega)$, its mollified version is defined as
	\begin{equation*}
	f^\eps(x) = (f\star \omega_\eps)(x) = \int_{\mathbb R^d}f(x-y)\omega_\eps(y)dy, \quad x \in \Omega_\varepsilon,
	\end{equation*}
	where
	$\Omega_\varepsilon = \{x\in\Omega: d(x,\partial\Omega) > \varepsilon \}$. 
	Throughout this paper, we will use the notations $\uu^\varepsilon(x,t)=\int_{\R^d}\uu(x-y,t) \omega_\varepsilon(y)dy$ and $\varrho^\varepsilon(x,t)=\int_{\R^d} \varrho(x-y,t) \omega_\varepsilon(y)dy$.
\end{itemize}

\section{Proof of Theorem \ref{torus}}\label{sec:torus}
In this section we write $\mathbb T^d$ instead of $\Omega$. By smoothing \eqref{E}, we obtain
\begin{equation}\label{Trho_approx}
	\pa_t\vre + \di(\vr \uu)^\eps = 0
\end{equation}
and
\begin{equation}\label{Tu_approx}
	\pa_t(\vr\uu)^\eps + \di(\vr\uu\otimes\uu)^\eps + \na P^\eps = 0.
\end{equation}
for any $0<\varepsilon<1$. 

Let $\phi \in C^\infty(\R_+)$ such that 
\begin{equation}\label{def_phi}
0 \leq \phi \leq 1,\quad \phi=0 \text{ in } [0,1], \quad \phi=1 \text{ in }[2,\infty) \quad \text{ and } \quad |\phi'|+ |\phi''| \leq C \text{ in } [0,\infty),
\end{equation}
for some constant $C$. For $\eta>0$, put \begin{equation}\label{cutoff}
\phi_{\eta,\varepsilon}(x,t)=\phi\left(\frac{\vre(x,t)}{\eta}\right).
\end{equation}
We see that $\phi_{\eta,\varepsilon} = 0$ in $\{\vre\leq \eta\}$, $\phi_{\eta,\varepsilon} = 1$ in $\{\vre \geq 2\eta\}$ and $0 \leq \phi_{\eta,\varepsilon} \leq 1$ in $Q_T$. 

Let $\delta_0$ be the positive constant in \eqref{Tu} and \eqref{Trho2}. Let $M$ be a positive constant such that 
\begin{equation} \label{condnear0}
\|\varrho\|_{L^\infty}+\|\varrho\|_{L^{\infty}(0,T;\V_{\delta_0}^{\frac{2}{3},\infty}(\mathbb{T}^d))} + \| \varrho^{-1} \, {\bf 1}_{ \{ \varrho \leq \delta_0  \} } \|_{L^{r_1}} + || \partial_t \varrho\, {\bf 1}_{ \{\varrho \leq \delta_0  \}}||_{L^{r_2}} \leq M.
\end{equation}	

Take $\varepsilon>0$ and $\eta>0$ such that $0<\varepsilon^{\frac{2}{3}} < \frac{1}{3M}\eta <\frac{1}{12M} \delta_0$.   

\medskip
Multiplying \eqref{Tu_approx} by $\phi_{\eta,\varepsilon} \frac{(\vr\uu)^\eps}{\varrho^\varepsilon}$ then integrating on $(\tau,t)\times\mathbb T^d$, for $0<\tau<t<T$, we get
\begin{equation}\label{h1}
	\int_{\tau}^t \int_{\T^d} \phi_{\eta,\varepsilon}\frac{(\vr\uu)^\eps}{\vre}  \pa_t(\vr\uu)^\eps dxds + \int_{\tau}^t \int_{\T^d} \phi_{\eta,\varepsilon}\frac{(\vr\uu)^\eps}{\vre} \di(\vr\uu\otimes\uu)^\eps dxds + \int_{\tau}^t \int_{\T^d} \phi_{\eta,\varepsilon}\frac{(\vr\uu)^\eps}{\vre} \na P^\eps dxds= 0.
\end{equation}
Denote by $(A), (B)$ and $(C)$ the terms on the left-hand side of \eqref{h1} respectively. We will estimate their convergence separately in the next subsections. 

\subsection{Estimate of $(A)$}
We rewrite $(A)$ as, using $\partial_t \vre = \di(\vr\uu)^\eps$,
\begin{equation}\label{h3}
\begin{aligned}
(A) &= \frac 12 \int_{\tau}^t \int_{\T^d} \phi_{\eta,\varepsilon} \pa_t\left(\frac{|(\vr\uu)^\eps|^2}{\vre}\right)dxds - \frac 12 \int_{\tau}^t \int_{\T^d} \frac{ \phi_{\eta,\varepsilon}}{(\vre)^2}\di[(\vr\uu)^\eps - \vre\ue]|(\vr\uu)^\eps|^2dxds\\
&\quad - \frac 12 \int_{\tau}^t \int_{\T^d} \frac{ \phi_{\eta,\varepsilon}}{(\vre)^2}\di(\vre\ue)|(\vr\uu)^\eps|^2dxds\\
&=: (A1) + (A2) + (A3).
\end{aligned}
\end{equation}

By integration by parts, we have
\begin{align} \label{A1}
(A1)&= \frac{1}{2}\partial_t\int_{\tau}^t\int_{\T^d}  \left(\phi_{\eta,\varepsilon} \frac{|(\vr\uu)^\eps|^2}{\vre} \right)(x,s)dxds - \frac{1}{2} \int_{\tau}^t \int_{\T^d} \partial_t \phi_{\eta,\varepsilon} \frac{|(\vr\uu)^\eps|^2}{\vre}dxds \\ \nonumber
&=: (A11) + (A12).
\end{align}
From \eqref{condnear0}, Lemma \ref{phi}, the fact that $\supp\, \partial_t \phi_{\eta,\varepsilon}= \supp\,(\phi_{\eta,\varepsilon}'\partial_t\vre)  \subset \{\eta \leq \vre \leq 2\eta\}$ and H\"older inequality, we obtain
\begin{equation} \label{A12-1} \begin{aligned}
|(A12)| &\leq \frac{1}{2}\eta^{-1}\int_\tau^t \int_{\T^d} \left| \phi' \left( \frac{\vre}{\eta} \right)\partial_t \vre  \right| \frac{|(\varrho \uu)^\varepsilon|^2}{\varrho^\varepsilon}dxds \\
&\leq C \eta^{-2}\int_\tau^t\int_{\T^d}|\partial_t\vre||(\varrho \uu)^\varepsilon|^2{\bf 1}_{\{ \eta \leq \vre \leq 2\eta  \}}dxds\\
&\leq C\eta^{-2}\|\partial_t\vre\mathbf{1}_{  \{\eta \leq \varrho^{\varepsilon} \leq  2\eta \}  }\|_{L^3}\|(\vr \uu)^\eps {\bf 1}_{\{ \eta \leq \vre \leq 2\eta  \}}\|_{L^3}^2.
\end{aligned} \end{equation}
By H\"older inequality,
\begin{align*}
|\partial_t \varrho^{\varepsilon}(x,t) | &= \left| \int_{  |y| < \varepsilon } \partial_t \varrho(x-y,t) \omega_\varepsilon(y)dy \right| \\
&\leq \left( \int_{  |y| < \varepsilon } |\partial_t \varrho(x-y)|^3 dy \right)^{\frac{1}{3}} \left( \int_{  |y| < \varepsilon  } \omega_\varepsilon(y)^{\frac{3}{2}} dy \right)^{\frac{2}{3}} \\
&\leq C\varepsilon^{-\frac{d}{3}}  \left( \int_{  |y| < \varepsilon } |\partial_t \varrho(x-y)|^3 dy \right)^{\frac{1}{3}}.
\end{align*}
This, together with Lemma \ref{phi} and \eqref{condnear0}, implies
\begin{equation} \label{A12-3} \begin{aligned}
\int_{\T^d}|\partial_t \varrho^{\varepsilon}(x,t) \mathbf{1}_{  \{\eta \leq \varrho^{\varepsilon} \leq  2\eta \}  }(x,t) |^3 dx &\leq C\varepsilon^{-d} \int_{|y|<\varepsilon} \int_{\T^d} |\partial_t \varrho(x-y,t) \mathbf{1}_{  \{\eta \leq \varrho^{\varepsilon} \leq  2\eta \}  }(x,t) |^3 dx dy \\
&\leq  C\varepsilon^{-d} \int_{|y|<\varepsilon} \int_{\T^d} |\partial_t \varrho(x-y,t) \mathbf{1}_{  \{\frac{1}{3}\eta \leq \varrho \leq  \frac{8}{3}\eta \}  }(x-y,t) |^3 dx dy 
 \\
&=  C\varepsilon^{-d} \int_{|y|<\varepsilon} \int_{\T^d} |\partial_t \varrho(x,t) \mathbf{1}_{  \{\frac{1}{3}\eta \leq \varrho \leq  \frac{8}{3}\eta \}  }(x,t) |^3 dx dy \\
&\leq CM^3.
\end{aligned} \end{equation}
{\color{red}}
Also, by H\"older's inequality,
\begin{align*}
|(\varrho \uu)^{\varepsilon}(x,t) | &= \left| \int_{  |y| < \varepsilon }  \varrho(x-y,t) \uu(x-y) \omega_\varepsilon(y)dy \right| \\
&\leq \left( \int_{  |y| < \varepsilon } \varrho(x-y)^3 | \uu(x-y)|^3 dy \right)^{\frac{1}{3}} \left( \int_{  |y| < \varepsilon  } \omega_\varepsilon(y)^{\frac{3}{2}} dy \right)^{\frac{2}{3}} \\
&= C\varepsilon^{-\frac{d}{3}}  \left( \int_{  |y| < \varepsilon } \varrho(x-y)^3 | \uu(x-y)|^3 dy \right)^{\frac{1}{3}}.
\end{align*}
Consequently, by Lemma \ref{phi} (ii),
\begin{equation} \label{A12-4} \begin{aligned}
&\int_{\T^d} |(\varrho \uu)^{\varepsilon}(x,t) \mathbf{1}_{  \{\eta \leq \varrho^{\varepsilon} \leq  2\eta \}  }(x,t)|^3 dx \\
&\leq  C\varepsilon^{-d} \eta^3 \int_{|y|<\varepsilon} \int_{\T^d} |\uu(x-y,t) \mathbf{1}_{  \{\frac{1}{3}\eta \leq \varrho \leq  \frac{8}{3}\eta \}  }(x-y,t) |^3 dx dy
\\
&\leq  C\varepsilon^{-d} \eta^3 \int_{|y|<\varepsilon} \int_{\T^d} |\uu(x-y,t) \mathbf{1}_{  \{\frac{1}{3}\eta \leq \varrho \leq  \frac{8}{3}\eta \}  }(x-y,t) |^3 dx dy \\
&= C \eta^3 \| \uu \mathbf{1}_{  \{\frac{1}{3}\eta \leq \varrho \leq  \frac{8}{3}\eta \} } \|_{L^3(\T^d)}^3.
\end{aligned} \end{equation}
Combining \eqref{A12-1}, \eqref{A12-3} and \eqref{A12-4} implies
\begin{align} \label{A12-2}
|(A12)| \leq C M\| \uu \mathbf{1}_{  \{\frac{1}{3}\eta \leq \varrho \leq  \frac{8}{3}\eta \} } \|_{L^3(\T^d)}^2.
\end{align}
Therefore, by the dominated convergence theorem, we deduce that
$$ \limsup_{\eta;\delta;\varepsilon;\tau \to 0}|(A12)|=0.
$$
The term $(A11)$ is the desired term. The term $(A3)$ will be canceled with the term $(B22)$ when estimating $(B)$ (see \eqref{h13}). Therefore, for $(A)$ it remains to estimate $(A2)$. 

\medskip
By integration by parts and H\"older's inequality,
\begin{equation} \label{hu0}
|(A2)| \leq \frac{1}{2} \int_{\tau}^{t}\left\|\nabla \left(  \phi_{\eta,\varepsilon} \frac{|(\vr\uu)^\eps|^2}{(\vre)^2} \right) \right\|_{L^{\frac{3}{2}}(\mathbb{T}^d)}\|((\vr\uu)^\eps - \vre\ue)\|_{L^3(\mathbb{T}^d)} ds.
\end{equation}
For any $\delta \in (\varepsilon,\delta_0)$, by Lemma \ref{keylemma-Td} (2), we estimate
\begin{equation} \label{hu01}
\|(\vr\uu)^\eps - \vre\ue\|_{L^3(\mathbb{T}^d)} \leq C(M) \varepsilon^{\frac{2}{3}}  \|\uu\|_{\V_{\delta}^{\frac{1}{3},3}(\T^d)}.
\end{equation}
On the other hand
\begin{equation} \label{hu1} \left\|\nabla \left(  \phi_{\eta,\varepsilon} \frac{|(\vr\uu)^\eps|^2}{(\vre)^2} \right) \right\|_{L^{\frac{3}{2}}(\mathbb{T}^d)} \leq \left\|\nabla  \phi_{\eta,\varepsilon} \left(  \frac{|(\vr\uu)^\eps|^2}{(\vre)^2} \right) \right\|_{L^{\frac{3}{2}}(\mathbb{T}^d)} + \left\| \phi_{\eta,\varepsilon} \nabla \left(   \frac{|(\vr\uu)^\eps|^2}{(\vre)^2} \right) \right\|_{L^{\frac{3}{2}}(\mathbb{T}^d)}.
\end{equation}
From  \eqref{condnear0}, Lemma \ref{keylemma-Td} (1) and Lemma \ref{phi}, we see that  
\begin{equation} \label{est-phi1} \begin{aligned}
\left\|\nabla  \phi_{\eta,\varepsilon} \left(  \frac{|(\vr\uu)^\eps|^2}{(\vre)^2} \right) \right\|_{L^{\frac{3}{2}}(\mathbb{T}^d)} &=\eta^{-1}\left\|\phi' \left( \frac{\vre}{\eta} \right) |\nabla \vre|   \frac{|(\vr\uu)^\eps|^2}{(\vre)^2}  \right\|_{L^{\frac{3}{2}}(\mathbb{T}^d)} \\
&\leq C\eta^{-3}\|\nabla \vre\|_{L^\infty(\T^d)}\|(\vr\uu)^\eps\|_{L^3(\T^d)}^2\\
&\leq CM\eta^{-3}\eps^{-\frac 13}\|\vr\|_{\V_\delta^{\frac 23, \infty}(\T^d)}\|\uu\|_{L^3(\T^d)}^2 \\
&\leq C(M)\eta^{-3}\eps^{-\frac 13} \|\uu\|_{L^3(\T^d)}^2.
\end{aligned} \end{equation}
Next, since $\supp \,\phi_{\eta,\varepsilon} \subset \{ \varrho^\varepsilon \geq \eta\}$, $0 \leq \phi_{\eta,\varepsilon} \leq 1$, by Lemma \ref{phi}, together with Lemma \ref{keylemma-Td}, we can estimate
\begin{equation} \label{em-phi} \begin{aligned}
&\left\|  \phi_{\eta,\varepsilon} \nabla \left( \frac{|(\vr\uu)^\eps|^2}{(\vre)^2} \right) \right\|_{L^{\frac{3}{2}}(\mathbb{T}^d)}
\leq 2\left\|  \phi_{\eta,\varepsilon}  \frac{(\vr\uu)^\eps \nabla (\vr\uu)^\eps }{(\vre)^2} \right\|_{L^{\frac{3}{2}}(\mathbb{T}^d)} + 2\left\|  \phi_{\eta,\varepsilon} \frac{|(\vr\uu)^\eps|^2 \nabla \varrho^\varepsilon}{(\vre)^3}  \right\|_{L^{\frac{3}{2}}(\mathbb{T}^d)} \\
&\leq C\eta^{-2}\|(\vr\uu)^\eps\|_{L^3(\T^d)}\|\nabla(\vr\uu)^\eps\|_{L^3(\T^d)} + C\eta^{-3}\|\nabla \vre\|_{L^\infty(\T^d)}\|(\vr\uu)^\eps\|_{L^3(\T^d)}^2\\
&\leq C(M)\eta^{-2}\eps^{-\frac 23}\|\uu\|_{L^{3}(\T^d)}(\|\uu\|_{\V_{\delta}^{\frac{1}{3},3}(\T^d)} + \| \uu \|_{L^3(\T^d)}) + C\eta^{-3}\eps^{-\frac 13}\|\vr\|_{\V_\delta^{\frac 23,\infty}(\T^d)}\|\uu\|_{L^3(\T^d)}^2.
\end{aligned} \end{equation}
Combining \eqref{hu1}--\eqref{em-phi} leads to
\begin{equation} \label{hu2}
 \left\|\nabla \left(  \phi_{\eta,\varepsilon} \frac{|(\vr\uu)^\eps|^2}{(\vre)^2} \right) \right\|_{L^{\frac{3}{2}}(\mathbb{T}^d)} \leq C(M)\varepsilon^{-\frac{2}{3}}\eta^{-3} \|\uu\|_{L^{3}(\T^d)}\left(\|\uu\|_{\V_{\delta}^{\frac{1}{3},3}(\T^d)} +\|\uu\|_{L^{3}(\T^d)}\right).
\end{equation}
Inserting \eqref{hu01} and \eqref{hu2} into \eqref{hu0} yields
\begin{equation} \label{est-A2}
|(A2)| \leq C(M)\eta^{-3} \|\uu\|_{L^3}  \|\uu\|_{L^3(0,T;\V_{\delta}^{\frac{1}{3},3}(\T^d))} \left(\|\uu\|_{L^3(0,T;\V_{\delta}^{\frac{1}{3},3}(\T^d))} + \|\uu\|_{L^3}\right). 
\end{equation}
Therefore, by assumption \eqref{Tu}, 
\begin{equation}\label{h9}
\limsup_{\eta;\delta;\eps;\tau \to 0}|(A2)| = 0.
\end{equation}
We remark that, though there is a factor $\eta^{-3}$ in \eqref{est-A2}, the limit in \eqref{h9} is already $0$ after taking the limit $\delta\to 0$.
\subsection{Estimate of $(B)$}
By integration by parts we have
\begin{equation}\label{h10}
\begin{aligned}
	(B)&= -\int_{\tau}^t \int_{\T^d} [(\vr\uu\otimes\uu)^\eps - (\vr\uu)^\eps\otimes\ue] \di \left( \phi_{\eta,\varepsilon}\frac{(\vr\uu)^\eps}{\vre}\right)dxds \\
	&\quad- \int_{\tau}^t \int_{\T^d} (\vr\uu)^\eps\otimes\ue \di \left( \phi_{\eta,\varepsilon}\frac{(\vr\uu)^\eps}{\vre} \right)dxds\\
	&=: (B1) + (B2).
\end{aligned}
\end{equation}
It can be checked by using  Lemma \ref{keylemma-Td} (2) that
$$
\| (\vr\uu\otimes\uu)^\eps - (\vr\uu)^\eps\otimes\ue\|_{L^{\frac{3}{2}}(\mathbb{T}^d)} \leq C(M) \varepsilon^{\frac{2}{3}} \left( \|\uu\|_{\V_{\delta}^{\frac{1}{3},3}(\T^d)}+ \|\uu\|_{L^{3}(\T^d)}\right)\|\uu\|_{\V_{\delta}^{\frac{1}{3},3}(\T^d)}.
$$
By similar (even simpler) arguments in obtaining \eqref{hu2}, we can estimate
\begin{equation}
\left\|\di \left( \phi_{\eta,\varepsilon}\frac{(\vr\uu)^\eps}{\vre}\right)\right\|_{L^{3}(\mathbb{T}^d)}
\leq C(M)\eta^{-2}\eps^{-\frac 23}\left(\|\uu\|_{\V_\delta^{\frac 13,3}(\T^d)} + \|\uu\|_{L^3(\T^d)}\right).
\end{equation}
Thus, by H\"older's inequality,
\begin{equation}\label{h11}
\begin{aligned}
	|(B1)| &\leq C\int_\tau^t\|(\vr\uu\otimes\uu)^\eps -  (\vr\uu)^\eps\otimes\ue\|_{L^{\frac{3}{2}}(\T^d)}\left\|\di \left( \phi_{\eta,\varepsilon} \frac{(\vr\uu)^\eps}{\vre} \right) \right\|_{L^{3}(\mathbb{T}^d)}ds\\
	&\leq C(M)\eta^{-2}\left(\|\uu\|_{L^3}^2+\|\uu\|_{L^3(0,T;\V_{\delta}^{\frac{1}{3},3}(\T^d))}^2\right)\|\uu\|_{L^3(0,T;\V_{\delta}^{\frac{1}{3},3}(\T^d))}.
\end{aligned}
\end{equation}
Therefore, by \eqref{Tu}, 
\begin{equation}\label{h12}
\limsup_{\eta;\delta;\eps;\tau \to 0}|(B1)| = 0.
\end{equation}
We now turn to the estimate of $(B2)$. Using integration by parts we have
\begin{equation}\label{h13}
\begin{aligned}
	(B2) &= \int_{\tau}^t \int_{\T^d} \phi_{\eta,\varepsilon} \frac{(\vr\uu)^\eps}{\vre}\di[(\vr\uu)^\eps \otimes \ue]dxds= \int_{\tau}^t \int_{\T^d} \phi_{\eta,\varepsilon} \frac{(\vr\uu)^\eps}{\vre}[\na (\vr\uu)^\eps \ue + (\vr\uu)^\eps \na \ue] dxds\\
	 &= - \frac 12 \int_{\tau}^t \int_{\T^d} |(\vr\uu)^\eps|^2\di \left( \phi_{\eta,\varepsilon} \frac{\ue}{\vre} \right) dxds + \int_{\tau}^t\int_{\T^d} \phi_{\eta,\varepsilon} \frac{|(\vr\uu)^\eps|^2}{\vre}\na \cdot \ue dxds\\
	&= \int_{\tau}^t \int_{\T^d} \nabla \phi_{\eta,\varepsilon} |(\varrho \uu)^\varepsilon|^2 \frac{\uu^\varepsilon}{\varrho^\varepsilon}dxds  + \int_{\tau}^t \int_{\T^d} \phi_{\eta,\varepsilon} |(\vr\uu)^\eps|^2\left[\frac{1}{\vre} \na \cdot \ue - \frac 12 \na\cdot \frac{\ue}{\vre} \right] dxds \\
&= \int_{\tau}^t \int_{\T^d} \nabla \phi_{\eta,\varepsilon} |(\varrho \uu)^\varepsilon|^2 \frac{\uu^\varepsilon}{\varrho^\varepsilon}dxds + \frac 12\int_{\tau}^{t}\int_{\T^d} \phi_{\eta,\varepsilon} \frac{|(\vr\uu)^\eps|^2}{(\vre)^2}\na \cdot (\vre \ue)dxds \\
&=: (B21) + (B22).
\end{aligned}
\end{equation}
It is remarked that $(B22)=-(A3)$, and therefore we only need to estimate the term $(B21)$. We rewrite $(B21)$, using the divergence free condition, as 
\begin{align*}
	(B21) &= \eta^{-1}\int_\tau^t\int_{\T^d}\phi' \left(\frac\vre\eta \right)\frac{\ue\nabla\vre}{\vre}|(\vr\uu)^\eps|^2dxds  \\
	&= \eta^{-1}\int_\tau^t\int_{\T^d}\phi' \left(\frac\vre\eta \right)\frac{\nabla(\vre\ue - (\vr\uu)^\eps)}{\vre}|(\vr\uu)^\eps|^2dxds + \eta^{-1}\int_\tau^t\int_{\T^d}\phi' \left(\frac\vre\eta \right)\frac{\partial_t\vre}{\vre}|(\vr\uu)^\eps|^2dxds\\
	&= -\eta^{-1}\int_\tau^t\int_{\T^d}(\vre\ue - (\vr\uu)^\eps)\na\cdot\left(\phi' \left(\frac\vre\eta \right)\frac{|(\vr\uu)^\eps|^2}{\vre}\right)dxds\\
	&\quad + \eta^{-1}\int_\tau^t\int_{\T^d}\phi'\left(\frac\vre\eta \right)\frac{\partial_t\vre}{\vre}|(\vr\uu)^\eps|^2dxds.
\end{align*}	
It then follows from H\"older's inequality that
\begin{align*}	
	|(B21)| &\leq \eta^{-1}\int_\tau^t\|\vre\ue - (\vr\uu)^\eps\|_{L^3(\T^d)}\left\|\nabla\cdot\left(\phi' \left(\frac\vre\eta \right)\frac{|(\vr\uu)^\eps|^2}{\vre}\right)\right\|_{L^{\frac 32}(\T^d)}\\
	&\quad + C\eta^{-2}\|\partial_t\vre\mathbf{1}_{\eta\leq\vre\leq 2\eta}\|_{L^3}\|(\vr\uu)^\eps\mathbf{1}_{\eta\leq\vre\leq 2\eta}\|_{L^3}^2\\
	&=: (B211) + (B212).
\end{align*}
The term $(B211)$ can be estimated similarly to $(A2)$ in \eqref{hu0},
\begin{equation*}
	|(B211)| \leq C(M)\eta^{-3}\|\uu\|_{L^3} \|\uu\|_{L^3(0,T;\V_{\delta}^{\frac{1}{3},3}(\T^d))} \left(\|\uu\|_{L^3(0,T;\V_{\delta}^{\frac{1}{3},3}(\T^d))} + \|\uu\|_{L^3}\right).
\end{equation*}
Therefore, thanks to assumption \eqref{Tu},
\begin{equation*}
\limsup_{\eta;\delta;\eps;\tau \to 0}|(B211)| = 0.
\end{equation*}
The term $(B212)$ can be estimated similarly to $(A12)$, by using \eqref{A12-3} and \eqref{A12-4}, 
$$ |(B212)| \leq C  M\| \uu \mathbf{1}_{  \{\frac{1}{3}\eta \leq \varrho \leq  \frac{8}{3}\eta \} } \|_{L^3(\T^d)}^2.
$$
Hence, by the dominated convergence theorem, we obtain
\begin{equation*}
\limsup_{\eta;\delta;\eps;\tau \to 0}|(B212)| = 0.
\end{equation*}
\begin{remark}\label{divergence_free}
	We remark that we only use the free divergence free condition when estimating $(B21)$, and all the terms related to the pressure. This will be useful in the proof of Theorem \ref{toruscom} for compressible Euler equations.
\end{remark}
\subsection{Estimate of $(C)$}\footnote{When $\vr\equiv {\normalfont const}$, the term $(C)$ vanishes, and therefore the assumption $P\in L^{\frac 32}(Q_T)$ is not needed.}
Using the divergence free condition\footnote{We observe that except for the term $(B21)$, the divergence free is only used in the proof of Theorem \ref{torus} to treat the term involving the pressure. This observation will become helpful when treating compressible Euler equation in the next sections.}, we estimate
\begin{equation}\label{h2}
\begin{aligned}
(C) &= \int_{\tau}^t \int_{\T^d} \frac{\phi_{\eta,\varepsilon}}{\vre}((\vr\uu)^\eps - \vre \ue)\na P^\eps dxds - \int_{\tau}^t \int_{\T^d} \nabla \phi_{\eta,\varepsilon} \, \uu^\varepsilon P^\varepsilon dxds \\
&=: (C1) + (C2).
\end{aligned}
\end{equation}
From Lemma \ref{phi} (i), the fact $0 \leq \phi_{\eta,\varepsilon} \leq 1$, H\"older's inequality and Lemma \ref{keylemma-Td} (2), we get
\begin{equation}\label{h2_1}
\begin{aligned}
|(C1)| &\leq C\eta^{-1} \int_0^T\|(\vr\uu)^\eps - \vre\ue\|_{L^{3}(\T^d)}\|\na P^\eps\|_{L^{\frac{3}{2}}(\T^d)}ds\\
&\leq C\eta^{-1} \int_0^T\eps^{\frac{2}{3}}\|\vr\|_{\V_\delta^{\frac{2}{3},\infty}(\T^d)}\eps^{\frac{1}{3}}\|\uu\|_{\V_\delta^{\frac{1}{3},3}(\T^d)}\eps^{-1}\|P\|_{L^{\frac{3}{2}}(\T^d)}ds\\
&\leq C\eta^{-1} \|\vr\|_{L^\infty(0,T;\V_\delta^{\frac{2}{3},\infty}(\T^d))}\|\uu\|_{L^3(0,T;\V_{\delta}^{\frac{1}{3},3}(\T^d))}\|P\|_{L^{\frac{3}{2}}}.
\end{aligned}
\end{equation}
By assumption \eqref{Tu}, we have
\begin{equation*}
\limsup_{\eta;\delta;\eps;\tau \to 0}|(C1)| = 0.
\end{equation*}

Next we estimate the term $(C2)$. By integration by parts,
\begin{align*}
(C2) &= \eta^{-1}\int_\tau^t \int_{\T^d} \phi' \left( \frac{\vre}{\eta} \right) \nabla \vre \ue P^\varepsilon dxds \\
&= \eta^{-1}\int_\tau^t\int_{\T^d}\phi' \left( \frac{\vre}{\eta} \right) {\na(\vre\ue - (\vr\uu)^\eps)}P^\eps dxds + \eta^{-1} \int_\tau^t\int_{\T^d}\phi' \left( \frac{\vre}{\eta} \right){\partial_t\vre}P^\eps dxds\\
&\leq -\eta^{-1}\int_\tau^t\int_{\T^d}(\vre\ue - (\vr\uu)^\eps)\na\cdot\left(\phi'\left(\frac{\vre}{\eta}\right) P^\eps \right)dxds \\
&\quad + 2\int_\tau^t\int_{\T^d}\left|\phi' \left( \frac{\vre}{\eta} \right) \frac{\partial_t\vre}{\vre}P^\eps \mathbf{1}_{ \{ \eta\leq\vre\leq2\eta \}}\right|dxds.
\end{align*}
Hence, using Holder inequality, we deduce
\begin{align*}
|(C2)|&\leq \eta^{-2}\int_\tau^t\|\vre\ue - (\vr\uu)^\eps\|_{L^3(\T^d)}\left\|\phi''\left(\frac{\vre}{\eta}\right)\na\vre P^\eps  \right\|_{L^{\frac 32}(\T^d)}ds\\
&\quad + \eta^{-1}\int_\tau^t\|\vre\ue - (\vr\uu)^\eps\|_{L^3(\T^d)}\left\|\phi'\left(\frac{\vre}{\eta}\right)\na P^\eps  \right\|_{L^{\frac 32}(\T^d)}ds\\
&\quad + C\left\|(\vre)^{-1} \mathbf{1}_{ \{ \eta\leq\vre\leq2\eta \}} \right\|_{L^{r_1}}\|\partial_t\vre \mathbf{1}_{ \{ \eta\leq\vre\leq2\eta \}}\|_{L^{r_2}} \|P^\eps \mathbf{1}_{ \{ \eta\leq\vre\leq2\eta \}}\|_{L^{\frac{3}{2}}} \\
&=: (C21) + (C22) + (C23),
\end{align*}
where $r_1, r_2$ are in \eqref{Trho2}. Since $\phi''$ is bounded, due to Lemma \ref{keylemma-Td} (2), we get
\begin{align*}
	|(C21)| &\leq C\eta^{-2}\int_\tau^t\eps^{\frac 23}\|\vr\|_{\V_\delta^{\frac 23,\infty}(\T^d)}\eps^{\frac 13}\|\uu\|_{\V_\delta^{\frac 13,3}(\T^d)}\eps^{-\frac 13}\|\vr\|_{\V_{\delta}^{\frac 23,\infty}(\T^d)}\|P\|_{L^{\frac 32}(\T^d)}dt\\
	&\leq C\eta^{-2}\eps^{\frac 23}\|\vr\|_{L^\infty(0,T;\V_\delta^{\frac 23,\infty}(\T^d))}^2\|\uu\|_{L^3(0,T;\V_\delta^{\frac 13,3}(\T^d))}\|P\|_{L^{\frac 32}}.
\end{align*}
Thus
\begin{equation*}
\limsup_{\eta;\delta;\eps;\tau \to 0}|(C21)| = 0.
\end{equation*}
The term $(C22)$ can be estimated similarly to $(C1)$ in \eqref{h2_1} using the fact that $\phi'$ is uniformly bounded \eqref{def_phi}, and thus we get
$$
\limsup_{\eta;\delta;\eps;\tau \to 0}|(C22)| = 0.
$$
To estimate $(C23)$, we observe, due to Lemma \ref{phi}, that
\begin{equation} \label{C23-1} \left\|(\vre)^{-1} \mathbf{1}_{ \{ \eta\leq\vre\leq2\eta \}} \right\|_{L^{r_1}} \leq C\left\| \varrho^{-1} \mathbf{1}_{ \{ \frac{1}{3}\eta\leq \varrho \leq \frac{8}{3}\eta \}} \right\|_{L^{r_1}}
\leq C\left\| \varrho^{-1} \mathbf{1}_{ \{ \varrho \leq \delta_0 \}} \right\|_{L^{r_1}} \leq CM
\end{equation}
and (by using the argument leading to \eqref{A12-3} and replacing $3$ by $r_2$)
\begin{equation} \label{C23-2} \|\partial_t\vre \mathbf{1}_{ \{ \eta\leq\vre\leq2\eta \}}\|_{L^{r_2}} \leq C M.
\end{equation}
Similarly
\begin{equation} \label{C23-3} \|P^\eps \mathbf{1}_{ \{ \eta\leq\vre\leq2\eta \}}\|_{L^{\frac{3}{2}}} \leq C \|P \mathbf{1}_{ \{ \frac{1}{3}\eta\leq\varrho\leq \frac{8}{3}\eta \}}\|_{L^{\frac{3}{2}}}.
\end{equation}
From the above estimates, we obtain
$$ (C23) \leq C(M) \|P \mathbf{1}_{ \{ \frac{1}{3}\eta\leq\varrho\leq \frac{8}{3}\eta \}}\|_{L^{\frac{3}{2}}}.
$$
By dominated convergence theorem, 
$$
\limsup_{\eta;\delta;\eps;\tau \to 0}|(C23)| = 0.
$$
Collecting the above estimates, we eventually deduce that
$$
\limsup_{\eta;\delta;\eps;\tau \to 0}|(C)| = 0.
$$

\begin{remark}\label{P} The condition $\|\vr^{-1}\|_{L^{r_1}} < +\infty$ is only needed to control $(C23)$ resulting from the pressure term. In the compressible equation, this condition is not needed.
Removing this technical condition in the incompressible case remains as an interesting open problem.
\end{remark}

\subsection{Conclusion}\label{conclusion}
From the previous estimates we have
\begin{equation*}
\limsup_{\eta;\delta;\eps;\tau \to 0}	\left|\int_\tau^t\int_{\T^d}\partial_t\left[\phi_{\eta,\varepsilon} \frac{|(\vr\uu)^\eps|^2}{\vre}\right](x,s)dxds\right| =0.
\end{equation*}
By \eqref{ini1} and \eqref{ini2},  $\vr(\cdot,\tau) \rightharpoonup \vr_0$ and $(\vr\uu)(\cdot,\tau) \rightharpoonup \vr_0\uu_0$ in ${\mathcal D}'(\Omega)$, so for every $x\in \mathbb{T}^d$, 
$$
	\vre(x,\tau)\to\varrho_0^\varepsilon(x), \quad
(\vr\uu)^\eps(x,\tau)\to (\vr_0\uu_0)^\eps(x),$$
	as $\tau \to 0$. Thanks to dominated convergence theorem, it yields 
\begin{equation*}
\limsup_{\eta\to 0}\limsup_{\eps\to 0}\left| \int_{\T^d}\left( \phi_{\eta,\varepsilon} \frac{|(\vr\uu)^\eps|^2}{\vre}\right)(x,t)dx-	\int_{\T^d}\left( \phi_{\eta,\varepsilon} \frac{|(\vr_0\uu_0)^\eps|^2}{\vre_0}\right)(x)dx\right| =0.
\end{equation*}
Thus, by the standard property of convolution, we derive 
\begin{equation*}
\limsup_{\eta\to 0}\limsup_{\eps\to 0}\left| \int_{\T^d}\left( \phi_{\eta,\varepsilon} \varrho |\uu|^2 \right)(x,t)dx-	\int_{\T^d}\left( \phi_{\eta,\varepsilon} \varrho_0 |\uu_0|^2\right)(x)dx\right| =0.
\end{equation*}
This implies \eqref{conservation1} by using the dominated convergence theorem. The proof is complete. \qeda
\section{Proof of Theorem \ref{bounded}}\label{sec:bounded}


The proof of Theorem \ref{bounded} is similar to that of Theorem \ref{torus}, except that we have to take care of the boundary layers when taking integration by parts. Recalling the smooth version of \eqref{E} as
\begin{equation}\label{Brho_approx}
	\pa_t\vre + \di (\vr\uu)^\eps = 0 \quad \text{ in } \quad \Omega_{2\eps},
\end{equation}
and
\begin{equation}\label{Bu_approx}
	\pa_t(\vr\uu)^\eps + \di (\vr\uu\otimes\uu)^\eps + \na P^\eps = 0 \quad \text{ in } \quad \Omega_{2\eps}.
\end{equation}
Take $0 < \eps < \eps_1/10 < \eps_2/10<\delta_0/100$. Note that if $\eps_2 \to 0$ then $\eps, \eps_1 \to 0$. Multiplying \eqref{Bu_approx} by $\frac{\phi_{\eta,\eps}}{\vre}(\vr\uu)^{\eps}$ then integarting on $(\tau,t)\times\Omega_{\eps_2}$, with $0<\tau<t<T$, yield
\begin{equation}\label{k1}
\begin{aligned}
	\int_{\tau}^t \int_{\Omega_{\varepsilon_2}} \frac{\phi_{\eta,\eps}}{\vre}(\vr\uu)^\eps\pa_t(\vr\uu)^\eps dxds &+ \int_{\tau}^t \int_{\Omega_{\varepsilon_2}} \frac{\phi_{\eta,\eps}}{\vre}(\vr\uu)^\eps \di(\vr\uu\otimes\uu)^\eps dxds\\
	&+ \int_{\tau}^t \int_{\Omega_{\varepsilon_2}} \frac{\phi_{\eta,\eps}}{\vre}(\vr\uu)^\eps \na P^\eps dxds = 0.
\end{aligned}
\end{equation} 
For $\eps_3 > 0$ small, we integrate \eqref{k1} with respect to $\eps_2$ on $(\eps_1, \eps_1+\eps_3)$ to get
\begin{equation}\label{k2}
\begin{aligned}
	\frac{1}{\varepsilon_3} \int_{\varepsilon_1}^{\varepsilon_1 + \varepsilon_3}  \int_{\tau}^t \int_{\Omega_{\varepsilon_2}} \frac{\phi_{\eta,\eps}}{\vre}(\vr\uu)^\eps\pa_t(\vr\uu)^\eps \dxte &+ \frac{1}{\varepsilon_3} \int_{\varepsilon_1}^{\varepsilon_1 + \varepsilon_3}  \int_{\tau}^t \int_{\Omega_{\varepsilon_2}} \frac{\phi_{\eta,\eps}}{\vre}(\vr\uu)^\eps \di(\vr\uu\otimes\uu)^\eps \dxte\\
	&+ \frac{1}{\varepsilon_3} \int_{\varepsilon_1}^{\varepsilon_1 + \varepsilon_3}  \int_{\tau}^t \int_{\Omega_{\varepsilon_2}} \frac{\phi_{\eta,\eps}}{\vre}(\vr\uu)^\eps \na P^\eps \dxte = 0.
\end{aligned}
\end{equation}
Denote by $(D)$, $(E)$ and $(F)$ the three terms on the left-hand side of \eqref{k2}, which will be estimated separately in the following subsections. Let $M_{\eps_1}$ be a constant such that 
\begin{align*}
\|\varrho\|_{L^\infty}+\|\varrho\|_{L^{\infty}(0,T;\V_{\varepsilon_1/4}^{\frac{2}{3},\infty}(\Omega_{\eps_1/2}))}+\|\varrho^{-1}\mathbf{1}_{\{\vr \leq \delta_0\}}\|_{L^{r_1}}+\|\partial_t\vr \mathbf{1}_{\{\vr\leq \delta_0\}}\|_{L^{r_2}}\leq M_{\eps_1}. 
\end{align*}
It's worth mentioning that in the following, we let successively $\tau \to 0$, $\eps \to 0$, $\eps_1 \to 0$, $\eps_3 \to 0$ and then $\eta \to 0$. Therefore, at some estimates, after letting $\eps \to 0$, constants depending on $M_{\eps_1}$ (usually denoted by $C(M_{\eps_1})$) vanishes for each $\eps_1>0$, therefore we  will be able to send  $\eps_1 \to 0$ afterwards without encountering any issue.
\subsection{Estimate of $(D)$}
The term $(D)$ is rewritten as
\begin{equation}\label{k7}
\begin{aligned}
(D) &= \frac 12 \frac{1}{\varepsilon_3}\int_{\varepsilon_1}^{\varepsilon_1 + \varepsilon_3}  \int_{\tau}^t \int_{\Omega_{\varepsilon_2}}\phi_{\eta,\eps} \pa_t\left(\frac{|(\vr\uu)^\eps|^2}{\vre}\right)dxdsd\varepsilon_2\\
&\quad - \frac 12 \frac{1}{\varepsilon_3}\int_{\varepsilon_1}^{\varepsilon_1 + \varepsilon_3}  \int_{\tau}^t \int_{\Omega_{\varepsilon_2}} \frac{\phi_{\eta,\eps}}{(\vre)^2}\di[(\vr\uu)^\eps - \vre\ue]|(\vr\uu)^\eps|^2dxdsd\varepsilon_2\\
&\quad - \frac 12 \frac{1}{\varepsilon_3}\int_{\varepsilon_1}^{\varepsilon_1 + \varepsilon_3}  \int_{\tau}^t \int_{\Omega_{\varepsilon_2}} \frac{1}{(\vre)^2}\di(\vre\ue)|(\vr\uu)^\eps|^2dxdsd\varepsilon_2\\
&=: (D1) + (D2) + (D3).
\end{aligned}
\end{equation}
For $(D1)$ we have
\begin{align*}
(D1) &= \frac 12\frac{1}{\eps_3}\int_\tau^t\int_{\eps_1}^{\eps_1+\eps_3}\int_{\Omega_{\eps_2}}\partial_t\left[\phi_{\eta,\eps}\frac{|(\vr\uu)^\eps|^2}{\vre}(x,s)\right]\dxte\\
&\quad  - \frac 12 \Binttt \partial_t \phi_{\eta,\eps}\frac{|(\vr\uu)^\eps|^2}{\vre}\dxte\\
&=: (D11) + (D12).
\end{align*}
Since $(D11)$ is the desired term, we only estimate $(D12)$. Using similar arguments to estimate $(A12)$, we have
\begin{align*}
	|(D12)| &\leq C\eta^{-2}\int_{\tau}^t\int_{\Omega}|\partial_t\vre||(\vr\uu)^\eps|^2\mathbf{1}_{\{\eta\leq \vre\leq 2\eta\}}dxds\\
	&\leq C\eta^{-2}\|\partial_t\vre\mathbf{1}_{\{\eta\leq \vre\leq 2\eta\}}\|_{L^3}\|(\vr\uu)^\eps \mathbf{1}_{\{\eta\leq \vre\leq 2\eta\}}\|_{L^3}^2\\
	&\leq C\|\partial_t \vr \mathbf{1}_{\{\frac 13\eta\leq \vr\leq \frac 83\eta\}}\|_{L^3}\|\uu \mathbf{1}_{\{\frac 13\eta\leq \vr\leq \frac 83\eta\}}\|_{L^3}^2,
\end{align*}
thus
\begin{equation*}
 \limsup_{\eta;\eps_3;\eps_1;\eps;\tau \to 0}|(D12)| = 0.
\end{equation*}
It remains to estimate $(D2)$ since $(D3)$ will be estimated together with $(E3)$ later (see \eqref{k15}). Using integration by parts we have
\begin{equation*}
\begin{aligned}
	|(D2)| &\leq \frac 12 \left|\frac{1}{\varepsilon_3}\int_{\varepsilon_1}^{\varepsilon_1 + \varepsilon_3}  \int_{\tau}^t \int_{\partial \Omega_{\varepsilon_2}}\frac{\phi_{\eta,\eps}}{(\vre)^2} |(\vr\uu)^\eps|^2  [(\vr\uu)^\eps - \vre\ue]\cdot n(\theta) d{\mathcal H}^{d-1}(\theta)ds d\varepsilon_2 \right|\\
	&\quad + \frac 12 \left|\frac{1}{\varepsilon_3}\int_{\varepsilon_1}^{\varepsilon_1 + \varepsilon_3}  \int_{\tau}^t \int_{\Omega_{\varepsilon_2}} [(\vr\uu)^\eps - \vre\ue]\na\left(\phi_{\eta,\eps}\frac{|(\vr\uu)^\eps|^2}{(\vre)^2}\right)dxdsd\varepsilon_2 \right|\\
	&=: (D21) + (D22).
\end{aligned}
\end{equation*}
The term $(D22)$ can be estimated similarly to $(A2)$ as
\begin{equation}\label{k8}
\begin{aligned}
	(D22) &\leq \frac 12 \int_\tau^t\int_{\Omega_{\eps_1}}\left|[(\vr\uu)^\eps - \vre\ue]\na\left(\phi_{\eta,\eps}\frac{|(\vr\uu)^\eps|^2}{(\vre)^2}\right)\right| dxds\\
	&\leq C(M_{\eps_1})\eta^{-3}  \| \uu \|_{L^3(0,T;{\mathcal V}_{\varepsilon}^{\frac{1}{3},3}(\Omega_{\frac{\eps_1}{2}}) ) } \left[\|\uu\|_{L^3}^2+ \|\uu\|_{L^3}\|\uu\|_{L^3(0,T;{\mathcal V}_{\varepsilon}^{\frac{1}{3},3}(\Omega_{\frac{\eps_1}{2}}) ) }\right].
\end{aligned}
\end{equation} Therefore, by using \eqref{Tu'},
\begin{equation}\label{k9}
 \limsup_{\eta;\eps_3;\eps_1;\eps;\tau \to 0}(D22) = 0.
\end{equation}
For $(D21)$ we use the coarea formula:  for any $0<r_1<r_2< 1$, and $g\in L^1(\Omega_{r_1}\backslash\Omega_{r_2})$, 
\begin{align}\label{coareaformula} \int_{\Omega_{r_1}\backslash\Omega_{r_2}}g(x)dx=\int_{r_1}^{r_2}\int_{\partial\Omega_{\nu}}g(\theta)d\mathcal{H}^{d-1}(\theta) d\nu,
\end{align}
 and $\eps_3 \approx \mathcal{L}^d(\Omega_{\eps_1}\backslash\Omega_{\eps_1+\eps_3})$, and $\phi_{\eta,\eps} = 0$ if $\vre\leq \eta$, to get
\begin{align*}
(D21)&\leq C\left|\int_\tau^t\fint_{\Omega_{\eps_1}\backslash\Omega_{\eps_1+\eps_3}}\frac{\phi_{\eta,\eps}}{(\vre)^2} |(\vr\uu)^\eps|^2 [(\vr\uu)^\eps - \vre\ue]\cdot n(x)dxds \right|\\
&\leq C\eta^{-2}\left|\int_\tau^t\fint_{\Omega_{\eps_1}\backslash\Omega_{\eps_1+\eps_3}}|(\vr\uu)^\eps|^2 [(\vr\uu)^\eps - \vre\ue]\cdot n(x)dxds \right|.
\end{align*}
Since $\varrho \in L^\infty(Q_T)$ and $\uu\in L^3(Q_T)$, we have
\begin{equation}\label{k10}
 \limsup_{\eta;\eps_3;\eps_1;\eps;\tau \to 0} |(D21)| = 0.
\end{equation}

\subsection{Estimate of $(E)$}
Using integration by parts we have
\begin{equation}\label{k11}
\begin{aligned}
	(E) &= \frac{1}{\varepsilon_3}\int_{\varepsilon_1}^{\varepsilon_1 + \varepsilon_3}  \int_{\tau}^t \int_{\partial \Omega_{\varepsilon_2}} \frac{\phi_{\eta,\eps}}{\vre}(\vr\uu)^\eps (\vr\uu\otimes \uu)^\eps n(\theta)d{\mathcal H}^{d-1}(\theta)ds d\varepsilon_2\\
	&\quad - \frac{1}{\varepsilon_3}\int_{\varepsilon_1}^{\varepsilon_1 + \varepsilon_3}  \int_{\tau}^t \int_{\Omega_{\varepsilon_2}} (\vr\uu\otimes\uu)^\eps \di\left(\phi_{\eta,\eps} \frac{(\vr\uu)^\eps}{\vre}\right) dxdsd\varepsilon_2\\
	&=: (E1) - \frac{1}{\varepsilon_3}\int_{\varepsilon_1}^{\varepsilon_1 + \varepsilon_3}  \int_{\tau}^t \int_{\Omega_{\varepsilon_2}} [(\vr\uu\otimes\uu)^\eps - (\vr\uu)^\eps\otimes\ue] \di\left(\phi_{\eta,\eps} \frac{(\vr\uu)^\eps}{\vre}\right) dxdsd\varepsilon_2\\
	&\quad - \frac{1}{\varepsilon_3}\int_{\varepsilon_1}^{\varepsilon_1 + \varepsilon_3}  \int_{\tau}^t \int_{\Omega_{\varepsilon_2}} (\vr\uu)^\eps\otimes\ue \di\left(\phi_{\eta,\eps} \frac{(\vr\uu)^\eps}{\vre}\right) dxdsd\varepsilon_2\\
	&=: (E1) + (E2) + (E3).
\end{aligned}
\end{equation}
By the coarea formula \eqref{coareaformula}, we have
\begin{align*}
(E1)=\frac{1}{\varepsilon_3}  \int_{\tau}^t \int_{\Omega_{\varepsilon_1} \setminus \Omega_{\varepsilon_1 + \varepsilon_3 } } \frac{\phi_{\eta,\eps}}{\vre}(\vr\uu)^\eps (\vr\uu\otimes \uu)^\eps n(x)dxds.
\end{align*}
Therefore, letting successively $\tau \to 0$, $\varepsilon \to 0$ and $\varepsilon_1 \to 0$, then using the fact $\eps_3 \approx \mathcal L^d(\Omega\backslash \Omega_{\eps_3})$, $\phi_{\eta,\eps} = 0$ for $\vre\leq \eta$, and H\"older's inequality, we obtain
\begin{align*} 
\limsup_{\eps_1;\eps;\tau\to 0}|(E1)| &\leq \frac{C(\|\vr\|_{L^\infty})}{\varepsilon_3}\eta^{-1}  \int_{0}^T \int_{\Omega \setminus \Omega_{\varepsilon_3 } }  |\uu|^2  |\uu \cdot n|dxds \\
&\leq C(\|\vr\|_{L^\infty})\eta^{-1} \left( \int_0^T \fint_{\Omega \setminus \Omega_{\varepsilon_3}}   |\uu|^{3} dxds \right)^{\frac{2}{3}} \left( \int_0^T \fint_{\Omega \setminus \Omega_{\varepsilon_3}} |\uu \cdot n|^3 dxds \right)^{\frac{1}{3}}.
\end{align*}
Thus, by assumption \eqref{uboundary}, we derive
\begin{equation*}
	 \limsup_{\eta;\eps_3;\eps_1;\eps;\tau \to 0}|(E1)| = 0.
\end{equation*}
By proceeding as in estimating $(B1)$ we derive 
\begin{equation}\label{k13}
	\begin{aligned}
	|(E2)| &\leq C\int_{\tau}^t \int_{\Omega_{\varepsilon_1}} |(\vr\uu\otimes\uu)^\eps - (\vr\uu)^\eps\otimes\ue| \left| \nabla\cdot\left(\phi_{\eta,\eps} \frac{(\vr\uu)^\eps}{\vre}\right) \right| dxds  \\
	&\leq C\int_0^T\|(\vr\uu\otimes\uu)^\eps - (\vr\uu)^\eps\otimes \ue\|_{L^{\frac{3}{2}}(\Omega_{\varepsilon_1})}\left\|\di\left(\phi_{\eta,\eps} \frac{(\vr\uu)^\eps}{\vre}\right)\right\|_{L^3(\Omega_{\varepsilon_1})}ds\\
	&\leq C(M_{\eps_1})\eta^{-2}\left(\|\uu\|_{L^3}^2 + \|\uu\|_{L^3(0,T;{\mathcal V}_{\varepsilon}^{ \frac{1}{3},3}(\Omega_{\frac{\eps_1}{2}})) }^2\right)\|\uu\|_{L^3(0,T;{\mathcal V}_{\varepsilon}^{ \frac{1}{3},3}(\Omega_{\frac{\eps_1}{2}})) }.
	\end{aligned}
\end{equation}
Therefore, by assumption \eqref{Tu'}, it follows that
\begin{equation}\label{k14}
\limsup_{\eta;\eps_3;\eps_1;\eps;\tau \to 0}|(E2)| = 0.
\end{equation}
\subsection{Estimate of $(F)$}
By using the divergence free condition
we can split $(F)$ as
\begin{equation}\label{k3}
\begin{aligned}
(F) &= \frac{1}{\varepsilon_3} \int_{\varepsilon_1}^{\varepsilon_1 + \varepsilon_3}  \int_{\tau}^t \int_{\Omega_{\varepsilon_2}} \frac{\phi_{\eta,\eps}}{\vre}((\vr\uu)^\eps - \vre\ue)\na P^\eps dx ds d \varepsilon_2 + \frac{1}{\varepsilon_3} \int_{\varepsilon_1}^{\varepsilon_1 + \varepsilon_3}  \int_{\tau}^t \int_{\Omega_{\varepsilon_2}} \phi_{\eta,\eps}\ue\na P^\eps dxdsd\varepsilon_2\\
&=: (F1) + \frac{1}{\varepsilon_3} \int_{\varepsilon_1}^{\varepsilon_1 + \varepsilon_3}  \int_{\tau}^t \int_{\partial \Omega_{\varepsilon_2}} \phi_{\eta,\eps}P^\eps \uu^\varepsilon\cdot n(\theta) d{\mathcal H}^{d-1}(\theta) ds d\varepsilon_2\\
&\quad - \Binttt \na\phi_{\eta,\eps} \ue P^\eps \dxte\\
&=: (F1) + (F2) + (F3).
\end{aligned}
\end{equation}
The term $(F1)$ is estimated similarly to the term $(C1)$ in the case $\Omega=\T^d$, thus by Lemma \ref{keylemma-boundeddomain} (2), we obtain
\begin{equation}\label{k4}
\begin{aligned}
|(F1)| &\leq  \int_0^T\int_{\Omega_{\eps_1}}\left|\frac{\phi_{\eta,\eps}}{\vre}\right| |(\vr\uu)^\eps - \vre\ue| |\na P^\eps| dxds\\
&\leq C\eta^{-1}\|\varrho\|_{L^\infty(0,T;{\mathcal V}_{\frac{\varepsilon_1}{4}}^{\frac{2}{3},\infty}(\Omega_{\frac{\eps_1}{2}}))}\|\uu\|_{L^3(0,T;{\mathcal V}_{\varepsilon}^{ \frac{1}{3},3}(\Omega_{\frac{\eps_1}{2}})) }\|P\|_{L^{\frac{3}{2}}}.
\end{aligned}
\end{equation}
Hence, by assumption \eqref{Tu'},
\begin{equation}\label{k5}
\limsup_{\eta;\eps_3;\eps_1;\eps;\tau \to 0}|(F1)| = 0.
\end{equation}
For the term $(F2)$ we use the coarea formula \eqref{coareaformula} to obtain
$$
(F2) = \frac{1}{\eps_3}\int_\tau^t\int_{\Omega_{\eps_1}\backslash \Omega_{\eps_1+\eps_3}}\phi_{\eta,\eps}P^\eps \uu^\varepsilon \cdot n(x)dxds.
$$
By using the fact $\eps_3 \approx \mathcal L^d(\Omega\backslash \Omega_{\eps_3})$ and H\"older's inequality, we obtain
\begin{equation*}
\begin{aligned}
\limsup_{\varepsilon_1;\varepsilon;\tau \to 0}|(F2)| &= \left|\frac{1}{\eps_3}\int_{\tau}^t\int_{\Omega \backslash \Omega_{\eps_3}}\phi_{\eta} P(x,s) \uu(x,s) \cdot n(x)dxds\right|\\
&\leq C\left(\int_0^T\fint_{\Omega\backslash\Omega_{\eps_3}}|P(x,s)|^{\frac{3}{2}}dxds\right)^{\frac{2}{3}}\left(\int_0^T\fint_{\Omega\backslash\Omega_{\eps_3}}|\uu(x,s)\cdot n(x)|^{3}dxds\right)^{\frac{1}{3}}.
\end{aligned}
\end{equation*}
By the assumption \eqref{Pboundary} we get
\begin{equation}\label{k6}
\limsup_{\eta;\eps_3;\eps_1;\eps;\tau \to 0}|(F2)|=0.
\end{equation}
It remains to estimate $(F3)$. We use similar arguments in the estimation of $(C2)$. More precisely,
\begin{align*}
	|(F3)| &\leq \eta^{-1}\left|\Binttt \phi'\left(\frac{\vre}{\eta}\right)\na( \vre \ue - (\vr\uu)^\eps)  P^\eps \dxte\right|\\
	&\quad + \eta^{-1}\left|\Binttt \phi'\left(\frac{\vre}{\eta}\right) \partial_t \vre P^\eps\dxte\right|\\
	&\leq \eta^{-1}\left|\BintttB\phi'\left(\frac{\vre}{\eta}\right) (\vre \ue - (\vr\uu)^\eps) P^\eps \dHte  \right|\\
	&\quad + \eta^{-1}\left|\Binttt(\vre\ue-(\vr\uu)^\eps)\left[\phi''\left(\frac{\vre}{\eta}\right)\frac{\na\vre}{\eta}P^\eps + \phi'\left(\frac{\vre}{\eta}\right)\na P^\eps \right] \dxte \right|\\
	&\quad + \eta^{-1}\left|\Binttt \phi'\left(\frac{\vre}{\eta}\right) \partial_t \vre P^\eps\dxte\right|\\
	&=: (F31) + (F32) + (F33). 
\end{align*}
We estimate $(F32)$ similarly to $(C1)$ and $(C21)$ to get
\begin{align*}
	|(F32)| &\leq C\eta^{-1}\|\vr\|_{L^\infty(0,T;\V_{\eps}^{\frac 23,\infty}(\Omega_{\frac{\eps_1}{2}}))}\|\uu\|_{L^3(0,T;\V_\eps^{\frac 13,3}(\Omega_{\frac{\eps_1}{2}}))}\|P\|_{L^{\frac 32}}\\
	&\quad + C\eta^{-2}\eps^{\frac 23}\|\vr\|_{L^\infty(0,T;\V_{\eps}^{\frac 23,\infty}(\Omega_{\frac{\eps_1}{2}}))}^2\|\uu\|_{L^3(0,T;\V_\eps^{\frac 13,3}(\Omega_{\frac{\eps_1}{2}}))}\|P\|_{L^{\frac 32}},
\end{align*}
and thus$$\limsup_{\eta;\eps_3;\eps_1;\eps;\tau \to 0}|(F32)| = 0.$$
Using similar arguments in estimating $(F2)$, taking into account that $\vr\in L^\infty(Q_T)$, we have
\begin{equation*}
	\limsup_{\varepsilon_1;\varepsilon;\tau \to 0}|(F31)| \leq C\left(\int_0^T\fint_{\Omega\backslash\Omega_{\eps_3}}|P(x,s)|^{\frac{3}{2}}dxds\right)^{\frac{2}{3}}\left(\int_0^T\fint_{\Omega\backslash\Omega_{\eps_3}}|\uu(x,s)\cdot n(x)|^{3}dxds\right)^{\frac{1}{3}}
\end{equation*}
and therefore, thanks to assumption \eqref{Pboundary}
$$\limsup_{\eta;\eps_3;\eps_1;\eps;\tau \to 0}|(F31)| = 0.$$
Finally, $(F33)$ is estimated similarly to $(C23)$ as
\begin{equation*}
	|(F33)| \leq C\|\vr^{-1}\mathbf{1}_{\{\vr\leq \frac 83\eta\}}\|_{L^{r_1}}\|\partial_t\vr\mathbf{1}_{\{\vr\leq \frac 83\eta\}}\|_{L^{r_2}}\|P\mathbf{1}_{\{\vr\leq \frac 83\eta\}}\|_{L^{\frac 32}},
\end{equation*}
thus
$$\limsup_{\eta;\eps_3;\eps_1;\eps;\tau \to 0}|(F33)| = 0.$$
\subsection{Estimate of $(D3) + (E3)$}
Using integration by parts we compute (similarly to $(A3) + (B2) = 0$)
\begin{align}\label{k15}
(E3) =& -\frac{1}{\varepsilon_3}\int_{\varepsilon_1}^{\varepsilon_1 + \varepsilon_3}  \int_{\tau}^t \int_{\partial \Omega_{\varepsilon_2}}\phi_{\eta,\eps} (\vr\uu)^\eps\otimes \ue \frac{(\vr\uu)^\eps}{\vre}\cdot n(\theta)d{\mathcal H}^{d-1}(\theta)ds d\varepsilon_2\nonumber\\
&\quad + \frac{1}{\varepsilon_3}\int_{\varepsilon_1}^{\varepsilon_1 + \varepsilon_3}  \int_{\tau}^t \int_{\Omega_{\varepsilon_2}}\phi_{\eta,\eps} \frac{(\vr\uu)^\eps}{\vre}\di[(\vr\uu)^\eps\otimes \ue] dxdsd\varepsilon_2\nonumber\\
&=: (E31) + \frac{1}{\varepsilon_3}\int_{\varepsilon_1}^{\varepsilon_1 + \varepsilon_3}  \int_{\tau}^t \int_{\Omega_{\varepsilon_2}}\phi_{\eta,\eps} \frac{(\vr\uu)^\eps}{\vre}[\di (\vr\uu)^\eps \ue + (\vr\uu)^\eps \di\ue] dxdsd\varepsilon_2\nonumber\\
&= (E31) + \frac 12 \frac{1}{\varepsilon_3}\int_{\varepsilon_1}^{\varepsilon_1 + \varepsilon_3}  \int_{\tau}^t \int_{\partial \Omega_{\varepsilon_2}}\phi_{\eta,\eps} |(\vr\uu)^\eps|^2\frac{\ue}{\vre}\cdot n(\theta)d{\mathcal H}^{d-1}(\theta)ds d\varepsilon_2 \nonumber\\
&\quad - \frac 12 \frac{1}{\varepsilon_3}\int_{\varepsilon_1}^{\varepsilon_1 + \varepsilon_3}  \int_{\tau}^t \int_{\Omega_{\varepsilon_2}} |(\vr\uu)^\eps|^2\di\left(\frac{\phi_{\eta,\eps}}{\vre}\ue\right) dxdsd\varepsilon_2\nonumber\\
&\quad + \Binttt|(\vr\uu)^\eps|^2 \frac{\phi_{\eta,\eps}}{\vre}\di \ue \dxte\nonumber\\
&=: (E31) + (E32) - \frac 12 \frac{1}{\varepsilon_3}\int_{\varepsilon_1}^{\varepsilon_1 + \varepsilon_3}  \int_{\tau}^t \int_{\Omega_{\varepsilon_2}} |(\vr\uu)^\eps|^2\na \phi_{\eta,\eps}\frac{\ue}{\vre} dxdsd\varepsilon_2\nonumber\\
&\quad + \frac 12 \frac{1}{\varepsilon_3}\int_{\varepsilon_1}^{\varepsilon_1 + \varepsilon_3}  \int_{\tau}^t \int_{\Omega_{\varepsilon_2}} |(\vr\uu)^\eps|^2\frac{\di(\vre\ue)}{(\vre)^2} dxdsd\varepsilon_2\nonumber\\
&=: (E31) + (E32) - (E33) - (D3).
\end{align}
Therefore, it remains only to estimate $(E31)$, $(E32)$, and $(E33)$. By using a similar argument as in estimating $(E1)$, together with the coarea formula, H\"older's inequality and the fact $\eps_3 \approx \mathcal L^d(\Omega\backslash \Omega_{\eps_3})$, we obtain 
\begin{align*} 
\limsup_{\varepsilon_1;\varepsilon;\tau \to 0}(|(E31)|+|(E32)|) \leq C\eta^{-1}\left( \int_0^T \fint_{\Omega \setminus \Omega_{\varepsilon_3}}   |\uu|^{3} dxds \right)^{\frac{2}{3}} \left( \int_0^T \fint_{\Omega \setminus \Omega_{\varepsilon_3}} |\uu \cdot n|^3 dxds \right)^{\frac{1}{3}}.
\end{align*}
By assumption \eqref{uboundary}, we deduce
\begin{equation} \label{k16}
\limsup_{\eta;\eps_3;\eps_1;\eps;\tau \to 0}\left(|(E31)|+|(E32)| \right) = 0.
\end{equation}
For $(E33)$, similar arguments used in estimating $(B21)$ can be reused
\begin{align*}
	|(E33)| &\leq \eta^{-1}\left|\Binttt (\vre\ue - (\vr\uu)^\eps)\di\left(\phi'\left(\frac{\vre}{\eta}\right)\frac{|(\vr\uu)^\eps|^2}{\vre} \right)\dxte\right|\\
	&\quad + \eta^{-1}\left|\Binttt \phi'\left(\frac{\vre}{\eta} \right)\frac{\partial_t\vre}{\vre}|(\vr\uu)^\eps|^2\dxte \right|\\
	&\quad + \eta^{-1}\left|\BintttB (\vre\ue - (\vr\uu)^\eps)\phi'\left(\frac{\vre}{\eta} \right)\frac{|(\vr\uu)^\eps|^2}{\vre}\dHte \right|\\
	&=: (E331) + (E332) + (E333).
\end{align*}
The terms $(E331)$ and $(E332)$ are estimated similarly to $(B211)$ and $(B212)$, respectively,
\begin{equation*}
	|(E331)| \leq C(M_{\eps_1})\eta^{-3}\|\uu\|_{L^3}\|\uu\|_{L^3(0,T;\V_{\eps}^{\frac 13,3}(\Omega_{\frac{\eps_1}{2}}))}(\|\uu\|_{L^3(0,T;\V_{\eps}^{\frac 13,3}(\Omega_{\frac{\eps_1}{2}}))} + \|\uu\|_{L^3})
\end{equation*}
and
\begin{equation*}
	|(E332)| \leq C\|\partial_t\vr\mathbf{1}_{\{\vr\leq \delta_0\}}\|_{L^3}\|\uu \mathbf{1}_{\{\vr \leq \frac 83\eta\}}\|_{L^3}^2,
\end{equation*}
while the term $(E333)$ is estimated similarly to $(E1)$
\begin{equation*}
	\limsup_{\varepsilon_1;\varepsilon;\tau \to 0}|(E333)| \leq C\eta^{-2}\left( \int_0^T \fint_{\Omega \setminus \Omega_{\varepsilon_3}}   |\uu|^{3} dxds \right)^{\frac{2}{3}} \left( \int_0^T \fint_{\Omega \setminus \Omega_{\varepsilon_3}} |\uu \cdot n|^3 dxds \right)^{\frac{1}{3}}.
\end{equation*}
Combining these estimates one gets
\begin{equation}\label{k17}
	\limsup_{\eta;\eps_3;\eps_1;\eps;\tau \to 0}|(E33)| = 0.
\end{equation}
It then follows from \eqref{k15}, \eqref{k16} and \eqref{k17} that
\begin{align*}
\limsup_{\eta;\eps_3;\eps_1;\eps;\tau \to 0}|(D3)+(E3)| = 0.
\end{align*}
\subsection{Conclusion}
From the estimate of $(D)$, $(E)$ and $(F)$ we have
\begin{equation*}
\limsup_{\eta;\eps_3;\eps_1;\eps;\tau \to 0}\left|\frac 12\frac{1}{\eps_3}\int_{\eps_1}^{\eps_1+\eps_3}\int_\tau^t\int_{\Omega_{\eps_2}}\partial_t\left[\phi_{\eta,\eps}\frac{|(\vr\uu)^\eps|^2}{\vre}(x,s)\right]dxds\eps_2\right| = 0.
\end{equation*}
Arguing similarly to the case of torus $\Omega = \T^d$ we obtain finally the results of Theorem \ref{bounded},
\begin{equation*}
	\int_{\Omega}(\vr|\uu|^2)(x,t)dx = \int_{\Omega}(\vr_0|\uu_0|^2)(x)dx \quad \forall t\in (0,T).
\end{equation*}

\section{Proof of Theorem \ref{toruscom}}\label{sec:toruscom}
From the proof of Theorem \ref{torus}, we have for any $0<\tau<t<T$, 
\begin{equation}\label{h1com}
\begin{aligned}
\Tintt \frac{\phi_{\eta,\eps}}{\vre}(\vr\uu)^\eps\pa_t(\vr\uu)^\eps dxds &+ \Tintt \frac{\phi_{\eta,\eps}}{\vre}(\vr\uu)^\eps \di(\vr\uu\otimes\uu)^\eps dxds\\
&+ \Tintt \frac{\phi_{\eta,\eps}}{\vre}(\vr\uu)^\eps \na (p(\vr))^\eps dxds= 0.
\end{aligned}
\end{equation}
Let $M$ be a constant such that 
\begin{align*}
\|\varrho\|_{L^\infty}+\|\varrho\|_{L^{\infty}(0,T;\V_{\delta_0}^{\frac{1}{3},\infty}(\mathbb{T}^d))}\leq M. 
\end{align*}
In view of the proof of Theorem \ref{torus} and Remark \ref{divergence_free}, we only need to estimate the term $(B21)$ (now without the divergence free condition) and the last term on the left-hand side of \eqref{h1com}.

\medskip
For $(B21)$ we have
\begin{align*}
	|(B21)| &= \left|\eta^{-1}\int_\tau^t\int_{\T^d}\phi'\left(\frac{\vre}{\eta}\right)\na\vre\cdot \frac{\ue}{\vre}|(\vr\uu)^\eps|^2dxds\right|\\
	&\leq \eta^{-2}\|\ue|(\vr\uu)^\eps|^2\mathbf{1}_{\{\eta\leq \vre\leq 2\eta\}}\|_{L^1}\|\na\vre \mathbf{1}_{\{\eta\leq \vre\leq 2\eta\}}\|_{L^\infty}\\
	&\leq C\|\uu \mathbf{1}_{\{\frac 13 \eta \leq \vr \leq \frac 83 \eta\}}\|_{L^3}^3\|\na \vr \mathbf{1}_{\{\frac 13\eta \leq \vr \leq \frac 83\eta\}}\|_{L^\infty}.
\end{align*}
Therefore, thanks to \eqref{assumption-1},
\begin{equation*}
\limsup_{\eta;\delta;\eps;\tau \to 0}|(B21)| = 0.
\end{equation*}
It therefore remains to estimate the last term on the left-hand side of \eqref{h1com}. Put
\begin{equation} \label{p1} p_1(z) := \int_1^z\frac{p'(\ell)}{\ell}d\ell.
\end{equation}
We can then estimate 
\begin{align}\label{G}
	(G)&:= \int_\tau^t\int_{\T^d}\frac{\phi_{\eta,\eps}}{\vre}(\vr\uu)^\eps \na(p(\vr))^\eps dxds\nonumber\\
	&=\int_\tau^t\int_{\T^d} \frac{\phi_{\eta,\eps}}{\vre}[(\vr\uu)^\eps - \vre\ue] \na(p(\vr))^\eps dxds + \int_\tau^t\int_{\T^d}{\phi_{\eta,\eps}}\ue \na(p(\vr))^\eps dxds\nonumber\\
	&=: (G1) + \int_\tau^t\int_{\T^d}\phi_{\eta,\eps}\ue \na[(p(\vr))^\eps - p(\vre) ]dxds + \int_\tau^t\int_{\T^d}\phi_{\eta,\eps}\ue p'(\vre)\na \vre dxds\nonumber\\
	&=: (G1) + (G2) + \int_\tau^t\int_{\T^d}\phi_{\eta,\eps}\ue\vre \underbrace{\frac{p'(\vre)}{\vre}\na \vre}_{= \na p_1(\vre)}dxds \nonumber\\
	&= (G1) + (G2) + \int_\tau^t\int_{\T^d}\phi_{\eta,\eps}[\ue\vre - (\vr\uu)^\eps]\na p_1(\vre)dxds - \int_\tau^t\int_{\T^d}p_1(\vre)\na (\phi_{\eta,\eps}(\vr\uu)^\eps)dxds\nonumber\\
	&=: (G1) + (G2) + (G3) - \int_\tau^t\int_{\T^d}p_1(\vre)(\vr\uu)^\eps\na \phi_{\eta,\eps}dxds - \int_\tau^t\int_{\T^d}p_1(\vre)\phi_{\eta,\eps}\di(\vr\uu)^\eps dxds\nonumber\\
	&=: (G1) + (G2) + (G3) - (G4) - (G5).
\end{align}
To estimate $(G1)$, we observe from the fact $\int_{\R^d}\nabla \omega_\varepsilon(y)dy=0$ and H\"older's inequality that
$$ |\nabla (p(\varrho(x,t))^\varepsilon| \leq C\varepsilon^{-1-\frac{2d}{3}} \left( \int_{|y|<\varepsilon}|p(\varrho(x-y,t)-p(\varrho(x))|^{\frac{3}{2}}dy \right)^{\frac{2}{3}}.
$$
It follows that
\begin{align*}
&\int_{\T^d} |\nabla (p(\varrho(x,t))^\varepsilon|^{\frac{3}{2}} {\bf 1}_{ \{ \varrho \geq \eta  \}  }(x,t)dx \\
& \leq C\varepsilon^{-\frac{3}{2}-d} \int_{|y|<\varepsilon} \int_{\T^d} |p(\varrho(x-y,t)-p(\varrho(x,t))|^{\frac{3}{2}} {\bf 1}_{ \{ \varrho \geq \eta  \}  }(x,t)dx dy \\
&= C\varepsilon^{-\frac{3}{2}-d} \int_{|y|<\varepsilon} \int_{\T^d}  |p'(\xi)|^{\frac{3}{2}}|\varrho(x-y,t)-\varrho(x,t)|^{\frac{3}{2}} {\bf 1}_{ \{ \varrho \geq \eta  \}  }(x,t)dx dy,
\end{align*}
where $\xi$ is between $\varrho(x)$ and $\varrho(x-y,t)$. Since $\vr \in L^\infty(0,T;\V_\delta^{\frac 13,\infty}(\T^d))$, if $\varrho(x,t) \geq \eta$ then $\varrho(x-y,t) > \frac{2}{3}\eta$ for any $|y|<\varepsilon$ (see \eqref{B-rho1}). Therefore $\xi \in [\frac{2}{3}\eta, \| \varrho \|_{L^\infty}]$ and hence $|p'(\xi)|$ is bounded in $\{ \varrho \geq \eta \}$. Consequently,
\begin{align*}
&\int_{\T^d} |\nabla (p(\varrho(x,t))^\varepsilon|^{\frac{3}{2}} {\bf 1}_{ \{ \varrho \geq \eta  \}  }(x,t)dx \\
&\leq C(\eta)\varepsilon^{-\frac{3}{2}-d} \int_{|y|<\varepsilon} \int_{\T^d}  |\varrho(x-y,t)-\varrho(x)|^{\frac{3}{2}} {\bf 1}_{ \{ \varrho \geq \eta  \}  }(x,t)dx dy \\
&\leq C(\eta)\varepsilon^{-\frac{3}{2}-d} \int_{|y| < \varepsilon} |y|^{\frac{1}{2}} \left( |y|^{-\frac{1}{3}} \| \varrho(x-y,t) - \varrho(x,t) \|_{L^{\frac{3}{2}}(\T^d)} \right)^{\frac{3}{2}} dy. 
\end{align*}
This implies
\begin{equation} \label{nablap} \| |\nabla (p(\varrho(x,t))^\varepsilon {\bf 1}_{ \{ \varrho \geq \eta  \}} \|_{L^{\frac{3}{2}}(\T^d)} \leq C(\eta)\varepsilon^{-\frac{2}{3}} \| \varrho(t) \|_{\V^{\frac{1}{3},\frac{3}{2}}(\T^d)}.
\end{equation} 
Using \eqref{nablap} and arguments in estimating $(C1)$ in \eqref{h2_1}, we have
\begin{equation}\label{G1}
\begin{aligned}
	|(G1)| &\leq \int_\tau^t\int_{\T^d}\left|\frac{\phi_{\eta,\eps}}{\vre}\right||(\vr\uu)^\eps - \vre\ue||\na (p(\vr))^\eps|dxds\\
	&\leq C\eta^{-1} \int_0^T \|(\vr\uu)^\eps - \vre\ue\|_{L^3(\T^d)}\|\na (p(\vr))^\eps\mathbf{1}_{\{\vr\geq\eta \}}\|_{L^{\frac 32}(\T^d)}ds\\
	&\leq C(\eta)\eta^{-1} \int_0^T \eps^{\frac{2}{3}} \|\vr\|_{\V_\delta^{\frac 13,\infty}(\T^d)}\|\uu\|_{\V_\delta^{\frac 13,3}(\T^d)}\eps^{-\frac{2}{3}}\|\vr \|_{\V_{\delta}^{\frac 13,\frac 32}(\T^d)}ds\\
	&\leq C(M, \eta)\eta^{-1}\|\uu\|_{L^3(0,T;\V_\delta^{\frac 13,3}(\T^d))}.
\end{aligned}
\end{equation}
Therefore,
$$
\limsup_{\eta;\delta;\eps;\tau \to 0}|(G1)| = 0.
$$
Similarly,
\begin{align*}
|(G3)| &\leq \int_\tau^t \int_{\T^d} \phi_{\eta,\varepsilon} |\varrho^\varepsilon \uu^\varepsilon - (\varrho \uu)^\varepsilon|\frac{|p'(\varrho^\varepsilon)|}{\varrho^\varepsilon}|\nabla \varrho^\varepsilon|dxds \\
&\leq C(\eta)\eta^{-1} \int_0^T \| \varrho^\varepsilon \uu^\varepsilon - (\varrho \uu)^\varepsilon \|_{L^3(\T^d)} \| \nabla \varrho^\varepsilon \|_{L^{\frac{3}{2}}(\T^d)}ds \\
&\leq C(\eta)\eta^{-1} \int_0^T \varepsilon^{\frac{2}{3}}\| \varrho \|_{\V_\delta^{\frac{1}{3},\infty}(\T^d)} \| \uu \|_{\V_\delta^{\frac{1}{3},3}(\T^d)} \varepsilon^{-\frac{2}{3}} \| \varrho \|_{\V_\delta^{\frac{1}{3},\frac{3}{2}}(\T^d)}ds \\
&\leq C(\eta,M)\eta^{-1} \| \uu \|_{L^3(0,T;\V_\delta^{\frac{1}{3},3}(\T^d))}.
\end{align*}
Therefore
$$
\limsup_{\eta;\delta;\eps;\tau \to 0}|(G3)| = 0.
$$
For $(G4)$, we use \eqref{A12-4} and \eqref{assumption-1} to obtain
\begin{align*}
	|(G4)| &\leq \int_\tau^t\int_{\T^d}\left|p_1(\vre)(\vr\uu)^\eps \phi'\left(\frac{\vre}{\eta} \right)\frac{\na \vre}{\eta} \right|dxds\\
	&\leq C\eta^{-1}\|p_1(\vre) {\bf 1}_{ \{ \eta \leq \varrho^\varepsilon \leq 2\eta  \} } \|_{L^\infty}\|(\vr\uu)^\eps\mathbf{1}_{\{\eta\leq\vre\leq 2\eta \}}\|_{L^1}\|\na\vre\mathbf{1}_{\{\eta\leq\vre\leq 2\eta \}}\|_{L^\infty}\\
	&\leq C\|p_1(\vre) {\bf 1}_{ \{ \eta \leq \varrho^\varepsilon \leq 2\eta  \}  } \|_{L^\infty} \|\uu\mathbf{1}_{\{\frac 13\eta\leq \vr\leq \frac 83\eta \}}\|_{L^1}.
\end{align*}
We notice that 
$$ p_1(\varrho^\varepsilon)=\frac{p(\varrho^\varepsilon) + p_2(\varrho^\varepsilon)}{\varrho^\varepsilon} - p(1),$$
hence by \eqref{pressure_law}, $\|p_1(\vre) {\bf 1}_{ \{ \eta \leq \varrho^\varepsilon \leq 2\eta  \}  } \|_{L^\infty} < C$, where $C$ is independent of $\varepsilon$ and $\eta$. Therefore,
\begin{align*} |(G4)| \leq C\|\uu\mathbf{1}_{\{\frac 13\eta\leq \vr\leq \frac 83\eta \}}\|_{L^1}
\end{align*}
and consequently,
\begin{equation*}
\limsup_{\eta;\delta;\eps;\tau \to 0}|(G4)| = 0.
\end{equation*}
We use $\partial_t\vre = \di (\vr\uu)^\eps$ and the fact that
$$ \partial_t p_2(\varrho^\varepsilon) = \partial_t \varrho^\varepsilon (p_1(\varrho^\varepsilon) + p(1))
$$ 
(recalling $p_2$ defined in \eqref{p2}) to estimate $(G5)$,
\begin{align*}
	(G5) &= \int_\tau^t\int_{\T^d}\phi_{\eta,\eps}\partial_t p_2(\vre)dxds - p(1) \int_\tau^t\int_{\T^d} \phi_{\eta,\varepsilon} \partial_t \varrho^\varepsilon dxds \\
	&= \int_{\tau}^t\int_{\T^d}\partial_t(\phi_{\eta,\eps}p_2(\vre))(x,s)dxds - \int_\tau^t\int_{\T^d}p_2(\vre)\partial_t\phi_{\eta,\eps}dxds - p(1) \int_\tau^t\int_{\T^d} \phi_{\eta,\varepsilon} \partial_t \varrho^\varepsilon dxds\\
	&=: (G51) - (G52) - (G53).
\end{align*}
Since $(G51)$ is a desired term, we only need to estimate $(G52)$ and $(G53)$. We have
\begin{align*}
	|(G52)| &\leq \eta^{-1}\int_\tau^t\int_{\T^d}\left|p_2(\vre)\phi'\left(\frac{\vre}{\eta}\right)\partial_t\vre \right|dxds \\
	&\leq C\left\|\Bigl|\frac{p_2(\vre)}{\vre}\Bigr| | \mathbf{1}_{\{\eta\leq\vre\leq 2\eta \}}\partial_t\vre|\right\|_{L^1}\\
	&\leq C\| \mathbf{1}_{\{\eta\leq\vre\leq 2\eta \}}\partial_t\vre\|_{L^1}\\
	& \leq C\| \mathbf{1}_{\{\eta\leq \vr\leq 2\eta\}}\partial_t\vre\|_{L^3} |\{ (x,t) \in Q_T: \eta \leq \varrho^\varepsilon \leq 2\eta \}|^{\frac{2}{3}} \\
	&\leq CM|\{ (x,t) \in Q_T: \eta/3 \leq \varrho \leq 8\eta/3 \}|^{\frac{2}{3}}.
\end{align*}
for small enough $\eta>0$. Here we have used \eqref{pressure_law} for the third estimate, H\"older's inequality for the fourth estimate and \eqref{A12-3} for the last estimate. Thus, thanks to \eqref{assumption-1}, 
\begin{equation*}
\limsup_{\eta;\delta;\eps;\tau \to 0}|(G52)| = 0.
\end{equation*}

Next we estimate $(G53)$. By integration by parts,
\begin{align*}
(G53) &= p(1)\int_\tau^t\int_{\T^d} \phi_{\eta,\varepsilon} \partial_t \varrho^\varepsilon dxds \\ &=p(1) \int_{\tau}^t \int_{\T^d} (\varrho \uu)^\varepsilon \nabla \phi_{\eta,\varepsilon}dxds \\
&= \eta^{-1} p(1) \int_{\tau}^t \int_{\T^d} (\varrho \uu)^\varepsilon \phi' \left( \frac{\varrho^\varepsilon}{\eta} \right) \nabla \varrho^\varepsilon {\bf 1}_{ \{ \eta \leq  \varrho^\varepsilon \leq 2\eta \} } dxds
\end{align*}
Therefore, by H\"older inequality, \eqref{A12-4} and \eqref{assumption-1},
\begin{align*}
|(G53)| &\leq C\eta^{-1} \int_\tau^t \int_{\T^d} |(\varrho \uu)^\varepsilon| |\nabla \varrho^\varepsilon| {\bf 1}_{ \{ \eta \leq \varrho^\varepsilon \leq 2\eta \} } dxds \\
& \leq C\eta^{-1}  \|  (\varrho \uu)^\varepsilon {\bf 1}_{ \{ \eta \leq \varrho^\varepsilon \leq 2\eta \} }  \|_{L^3}  \| \nabla \varrho^\varepsilon {\bf 1}_{ \{ \eta\leq  \varrho^\varepsilon \leq 2\eta \} } \|_{L^{\frac{3}{2}}} \\
&\leq CM \| \uu {\bf 1}_{ \{ \frac{1}{3}\eta \leq \varrho \leq \frac{8}{3}\eta \} }\|_{L^3} \| \nabla \varrho {\bf 1}_{ \{  \varrho \leq \delta_0 \} }   \|_{L^{\frac{3}{2}}} \\
&\leq CM \| \uu {\bf 1}_{ \{ \frac{1}{3}\eta \leq \varrho \leq \frac{8}{3}\eta \} }\|_{L^3}.
\end{align*}
It follows that
$$ \limsup_{\eta;\delta;\varepsilon;\tau \to 0} |(G53)|=0.
$$

It therefore remains to estimate $(G2)$. Integration by parts leads to
\begin{align*}
	(G2) &= -\int_\tau^t\int_{\T^d}\na\phi_{\eta,\eps} \ue [(p(\vr))^\eps - p(\vre)]dxds - \int_\tau^t\int_{\T^d}\phi_{\eta,\eps}\na\ue [(p(\vr))^\eps - p(\vre)]dxds\\
	&=: (G21) + (G22).
\end{align*}
We will show that
\begin{equation}\label{pressure}
	\|((p(\vr))^\eps - p(\vre))\mathbf{1}_{\{\eta\leq \vre\}}\|_{L^{\frac 32}(\T^d)} \leq C(\eta)\eps
\end{equation}
where the constant $C(\eta)$ might blow up as $\eta \to 0$.
Indeed, first, thanks to Taylor's expansion and the continuity of $p''$, we have
\begin{equation}\label{p''}
	|p(a+b) - p(a) - p'(a)b| = |p''(\xi)||b|^2 \leq \Lambda(\eta)|b|^2
\end{equation}
for all $a, b$ such that $\frac 13\eta \leq a, a+b \leq \|\vr\|_{L^\infty}$, where $\Lambda(\eta) = \sup\{|p''(\xi)|: \frac 13\eta \leq \xi \leq \|\vr\|_{L^\infty} \}$. Now by writing
\[
	\vr(x-y,s) = \vr(x,s) + [\vr(x-y,s) - \vr(x,s)]
\]
and
\[
	\int_{\T^d}\vr(x-y,s)\omega_{\eps}(y)dy = \vr(x,s) + \int_{\T^d}(\vr(x-y,s) - \vr(x,s))\omega_\eps(y)dy
\]
we can apply \eqref{p''} with $a = \vr(x,s)$ and $b = \vr(x-y,s) - \vr(x,s)$, and then $a = \vr(x,s)$ and $b = \int_{\T^d}(\vr(x-y,s) - \vr(x,s))\omega_\eps(y)dy$ respectively, to obtain
\begin{align*}
	&|(p(\vr))^\eps(x,s) - p(\vre)(x,s)|\mathbf{1}_{\{\frac 13 \eta \leq \vr\}}(x,s)\\
	&\leq \left|\int_{\T^d}[p(\vr(x-y,s)) - p(\vr(x,s)) - p'(\vr(x,s))(\vr(x-y,s) - \vr(x,s))]\omega_\eps(y)dy\right|\mathbf{1}_{\{\frac 13 \eta \leq \vr\}}(x,s)\\
	&+ \left|p\left(\int_{\T^d}\vr(x-y,s)\omega_\eps(y)dy \right) - p(\vr(x,s)) - p'(\vr(x,s))\int_{\T^d}(\vr(x-y,s) - \vr(x,s))\omega_\eps(y)dy \right|\mathbf{1}_{\{\frac 13 \eta \leq \vr\}}(x,s)\\
	&\leq 2\Lambda(\eta)\int_{\T^d}|\vr(x-y,s) - \vr(x,s)|^2\omega_\eps(y)dy\\
	&\leq C(\eta) \varepsilon^{\frac{2}{3}}.
\end{align*}
Therefore, 
$$
	\|((p(\vr))^\eps(\cdot,s) - p(\vre)(\cdot,s))\mathbf{1}_{\{\eta\leq \vre\}}\|_{L^{\frac 32}(\T^d)}\leq C(\eta)\varepsilon^{\frac{2}{3}}.
$$
Using \eqref{pressure} we now can esimtate
\begin{align*}
	|(G21)| &\leq \eta^{-1}\int_{\tau}^t\int_{\T^d}\left|\phi'\left(\frac{\vre}{\eta}\right)\na \vre \ue [(p(\vr))^\eps - p(\vre)]\right|dxds \\
	&\leq C\eta^{-1}\int_0^T\|\na \vre\|_{L^\infty(\T^d)}\|\ue\|_{L^3(\T^d)}\|((p(\vr))^\eps - p(\vre))\mathbf{1}_{\{\eta \leq \vre\}}\|_{L^{\frac 32}(\T^d)}ds\\
	&\leq C\eta^{-1}\eps^{-\frac 23}\|\vr\|_{L^\infty(0,T;\V_\delta^{\frac 13,\infty}(\T^d))}\|\uu\|_{L^3}C(\eta)\varepsilon^{\frac{2}{3}}\\
	&\leq C(\eta)\eta^{-1} \|\vr\|_{L^\infty(0,T;\V_\delta^{\frac 13,\infty}(\T^d))}\|\uu\|_{L^3}
\end{align*}
and thus by \eqref{Tucom},
\begin{equation*}
\limsup_{\eta;\delta;\eps;\tau \to 0}|(G21)| = 0.
\end{equation*}
Similarly
\begin{align*}
	|(G22)| &\leq \int_{\tau}^t\int_{\T^d}|\na \ue||\mathbf{1}_{\{\eta\leq\vre\}}[(p(\vr))^\eps - p(\vre)]|dxds\\
	&\leq \int_0^T \|\na\ue\|_{L^3(\T^d)}\|\mathbf{1}_{\{\eta\leq\vre\}}[(p(\vr))^\eps - p(\vre)]\|_{L^{\frac 32}(\T^d)}ds\\
	&\leq C\eps^{-\frac 23}\|\uu\|_{L^3(0,T;\V_\delta^{\frac 13,3}(\T^d))}C\Lambda(\eta)\eps^{\frac{2}{3}}\\
	&\leq C\Lambda(\eta)\|\uu\|_{L^3(0,T;\V_\delta^{\frac 13,3}(\T^d))},
\end{align*}
hence by \eqref{Tucom},
\begin{equation*}
	\limsup_{\eta;\delta;\eps;\tau \to 0}|(G22)| = 0.
\end{equation*}
\subsection{Conclusion}
From the previous estimates we have 
\begin{equation*}
\limsup_{\eta;\delta;\eps;\tau \to 0}\left|\Tintt \pa_t\left[\frac 12\frac{\phi_{\eta,\eps}}{\vre}|(\vr\uu)^\eps|^2+\phi_{\eta,\eps}p_2(\vre)\right]dxds\right| = 0.
\end{equation*}
By proceeding as in Section \ref{conclusion} with an additional argument concerning the term $p_2(\vr)$, we derive the energy conservation  \eqref{conserEc}. \qeda

\section{Proof of Theorem \ref{boundedcom}}\label{sec:boundedcom}
The proof of Theorem \ref{boundedcom} is similar to that of Theorem \ref{toruscom}, except that we have to take care of boundary layers when taking integration by parts. 
Take $0 < \eps < \eps_1/10 < \eps_3/10<\delta_0/100$, as proof of Theorem \ref{bounded}, we have
\begin{equation}\label{k2com}
\begin{aligned}
\frac{1}{\varepsilon_3} \int_{\varepsilon_1}^{\varepsilon_1 + \varepsilon_3}  \int_{\tau}^t \int_{\Omega_{\varepsilon_2}} \frac{\phi_{\eta,\eps}}{\vre}(\vr\uu)^\eps\pa_t(\vr\uu)^\eps dxdsd\varepsilon_2 &+ \frac{1}{\varepsilon_3} \int_{\varepsilon_1}^{\varepsilon_1 + \varepsilon_3}  \int_{\tau}^t \int_{\Omega_{\varepsilon_2}} \frac{\phi_{\eta,\eps}}{\vre}(\vr\uu)^\eps \di(\vr\uu\otimes\uu)^\eps dxdsd\varepsilon_2\\
&+ \frac{1}{\varepsilon_3} \int_{\varepsilon_1}^{\varepsilon_1 + \varepsilon_3}  \int_{\tau}^t \int_{\Omega_{\varepsilon_2}} \frac{\phi_{\eta,\eps}}{\vre}(\vr\uu)^\eps \na (p(\vr))^\eps dxdsd\varepsilon_2 = 0.
\end{aligned}
\end{equation}
In view of the proof of Theorem \ref{bounded}, we only have to deal with the term $(E33)$ (now without the divergence free condition) and the last term on the left-hand side of \eqref{k2com}.

\medskip
For $(E33)$ we have, using arguments similar to \eqref{A12-4}, 
\begin{align*}
	|(E33)| &= \frac 12\eta^{-1}\Binttt |(\vr\uu)^\eps|^2\phi'\left(\frac{\vre}{\eta} \right)\na \vre \frac{\ue}{\vre}\dxte\\
	&\leq C\eta^{-2}\Binttt|\ue||(\vr\uu)^\eps|^2|\mathbf{1}_{\eta \leq \vre}\na\vre|\dxte\\
	&\leq C\|\mathbf{1}_{\{\frac 13\eta\leq \vr \leq \frac 83\eta\}}\na \vre\|_{L^\infty}\left\|\uu\mathbf{1}_{\{\frac 13\eta\leq \vr \leq \frac 83\eta\}}\right\|_{L^3}^3,
\end{align*}
and therefore
\begin{equation*}
	\limsup_{\eta;\eps_3;\eps_1;\eps;\tau \to 0}|(E33)| = 0
\end{equation*}
thanks to \eqref{assumption-2}. For the last term on the left-hand side of \eqref{k2com}, we we can repeat all the arguments in estimating $(G)$ in the proof of Theorem \ref{toruscom}.  The only extra effort is the boundary terms when integrating by parts. More precisely, for the sixth line of \eqref{G}, $(G2)$ and $(G53)$, which are
\begin{equation*}
	(H1) = \BintttB p_1(\vre)\phi_{\eta,\eps}(\vr\uu)^\eps \cdot n(\theta)\dHte,
\end{equation*}
\begin{equation*}
	(H2) = \BintttB \phi_{\eta,\eps} \ue[(p(\vr))^\eps - p(\vre)]\cdot n(\theta)\dHte,
\end{equation*}
and
\begin{equation*}
	(H3) = p(1)\BintttB \phi_{\eta,\eps}(\vr\uu)^\eps \cdot n(\theta)\dHte,
\end{equation*}
respectively. Using the assumptions \eqref{uboundarycom} and \eqref{Pboundarycom} we can easily estimate $(H1)$, $(H2)$ and $(H3)$ similarly to $(E31)$ and $(E32)$ to obtain
\begin{equation*}
\limsup_{\eta;\eps_3;\eps_1;\eps;\tau \to 0}(|(H1)| + |(H2)| + |(H3)|) = 0.
\end{equation*}
Therefore, we conclude 
\begin{align*}
  \limsup_{\eta;\varepsilon_3;\varepsilon_1;\varepsilon;\tau \to 0}   \left|\frac{1}{\varepsilon_3} \int_{\varepsilon_1}^{\varepsilon_1 + \varepsilon_3}\!\!\!\int_{\tau}^t \int_{\Omega_{\varepsilon_2}} \pa_t\left[\frac 12\frac{\phi_{\eta,\eps}}{\vre}|(\vr\uu)^\eps|^2+\phi_{\eta,\eps}p_2(\vre)\right]dxds \right|=0.
\end{align*}
This gives the energy conservation \eqref{conserEcboun}. The proof is complete. \qeda
\appendix \section{Appendix}\label{appendix}
In this section, we collect some lemmata and estimates which are used for the proofs of the main results.

We recall that, for a function $f:\Omega\to \mathbb R$ we denote by $f^\eps = f\star \omega_\eps$ the smoothing version of $f$, where $\omega_\eps(x) = (1/\eps^d)\omega(x/\eps)$ is a standard mollifier in $\mathbb R^d$.

\begin{lemma} \label{keylemma-boundeddomain}
	Let $\Omega\subset \mathbb R^d$ be a bounded domain with $C^2$ boundary $\pa\Omega$. 
	\begin{itemize}
		\item[(1)] Let $\beta\in (0,1)$ and $1 \leq p \leq \infty$. Then for any function $f: \Omega \to {\mathbb R}$ and small $0<\varepsilon\leq \frac{\delta}{2}$, there holds
		\begin{align} \label{fe1}
		&	\|\nabla f^\varepsilon\|_{L^p(\Omega_\delta)}\leq C \varepsilon^{-1}   \|f\|_{L^{p}(\Omega_{\frac{\delta}{2}} )},\\ \label{fe2}
		&\|\nabla f^\varepsilon\|_{L^p(\Omega_\delta)}\leq C \varepsilon^{-1+\beta }   \|f\|_{\mathcal{V}^{\beta,p}_{\delta}(\Omega)}.
		\end{align}
		\item[(2)] Let $\beta_1,\beta_2\in (0,1)$, $p\geq 1, p_1,p_2\geq 1$ such that $\frac{1}{p}=\frac{1}{p_1}+\frac{1}{p_2}$ and $\delta_1>0, \delta_2>0, \delta \geq \max\{ \delta_1, \delta_2 \}$. Then for any functions $g_1, g_2: \Omega \to {\mathbb R}$ and small $0<\varepsilon\leq \frac{\min \{ \delta_1,\delta_2 \} }{2}$, there holds
		\begin{equation} \label{fe3}
		\|(g_1g_2)^\varepsilon-g_1^\varepsilon g_2^\varepsilon\|_{L^p(\Omega_\delta)}\leq C \varepsilon^{\beta_1+\beta_2} \|g_1\|_{\mathcal{V}^{\beta_1,p_1}_{\delta_1}(\Omega)} \|g_2\|_{\mathcal{V}^{\beta_2,p_2}_{\delta_2}(\Omega)}.
		\end{equation}
		\item[(3)] Let $\beta \in (0,1)$  and  $p \geq 1$. Then for any functions $g_1, g_2: \Omega \to {\mathbb R}$ and small $\delta\in (0,1)$, there holds
		\begin{align}\label{fe4}
		\|g_1 g_2\|_{\mathcal{V}^{\beta,p}_{\varepsilon}(\Omega_{\delta})}\leq C\left(\|g_1\|_{L^{\infty}(\Omega_{\frac{\delta}{2}})}\| g_2\|_{\mathcal{V}^{\beta,p}_{\varepsilon}(\Omega_{\frac{\delta}{2}})}+\|g_1\|_{\mathcal{V}^{\beta,\infty}_{\delta}(\Omega_{\frac{\delta}{2}})}	\| g_2\|_{L^{p}(\Omega_{\frac{\delta}{2}})}\right),
		\end{align}
		for any $0<\varepsilon<\delta/4$.
	\end{itemize}
\end{lemma}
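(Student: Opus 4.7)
All three estimates are classical mollifier bounds adapted to a bounded domain. The common structural observation is that whenever $x\in\Omega_\delta$ and $|y|<\varepsilon\leq\delta/2$, the translate $x-y$ remains in $\Omega_{\delta/2}$, so mollifications and finite differences stay well inside $\Omega$ throughout.

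For \eqref{fe1}, I would start from $\nabla f^\varepsilon(x)=\int f(x-y)\nabla\omega_\varepsilon(y)\,dy$ and apply pointwise H\"older in $y$ with conjugate exponents $p$ and $p/(p-1)$; the scaling $\nabla\omega_\varepsilon(y)=\varepsilon^{-d-1}(\nabla\omega)(y/\varepsilon)$ yields $\|\nabla\omega_\varepsilon\|_{L^{p/(p-1)}}=\varepsilon^{-1-d/p}\|\nabla\omega\|_{L^{p/(p-1)}}$. Raising to the $p$-th power and using Fubini to interchange the order of integration collapses $\int_{\Omega_\delta}\|f\|_{L^p(B(x,\varepsilon))}^p\,dx$ into $C\varepsilon^d\|f\|_{L^p(\Omega_{\delta/2})}^p$, giving the net $\varepsilon^{-1}$ scaling. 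For \eqref{fe2}, use $\int\nabla\omega_\varepsilon=0$ to write $\nabla f^\varepsilon(x)=\int[f(x-y)-f(x)]\nabla\omega_\varepsilon(y)\,dy$ and repeat the argument; the Fubini step now replaces the crude $\|f\|_{L^p(B(x,\varepsilon))}$ bound by $\int_{|y|<\varepsilon}\|f(\cdot-y)-f\|_{L^p(\Omega_\delta)}^p\,dy\lesssim \varepsilon^{\beta p+d}\|f\|_{\mathcal{V}^{\beta,p}_\delta}^p$, producing the extra factor $\varepsilon^\beta$.

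For part 2, the key is the Constantin--E--Titi commutator identity
\begin{equation*}
(g_1g_2)^\varepsilon-g_1^\varepsilon g_2^\varepsilon = \int\omega_\varepsilon(y)[g_1(\cdot-y)-g_1][g_2(\cdot-y)-g_2]\,dy-(g_1-g_1^\varepsilon)(g_2-g_2^\varepsilon),
\end{equation*}
verified by direct expansion using $\int\omega_\varepsilon=1$. For the first piece, apply pointwise H\"older in $y$ with three exponents $p/(p-1),p_1,p_2$ (whose reciprocals sum to one thanks to $1/p=1/p_1+1/p_2$) to produce the factor $\|\omega\|_{L^{p/(p-1)}}\varepsilon^{-d/p}$; then take the $L^p(\Omega_\delta)$-norm in $x$ via generalized H\"older, Fubini, and the seminorm bound $\|g_i(\cdot-y)-g_i\|_{L^{p_i}(\Omega_\delta)}\leq|y|^{\beta_i}\|g_i\|_{\mathcal{V}^{\beta_i,p_i}_{\delta_i}}$. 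The remaining $y$-integrals $\int_{|y|<\varepsilon}|y|^{\beta_i p_i}\,dy\lesssim\varepsilon^{\beta_i p_i+d}$ recover the announced $\varepsilon^{\beta_1+\beta_2}$ rate. The second piece is handled by the auxiliary bound $\|g_i-g_i^\varepsilon\|_{L^{p_i}(\Omega_\delta)}\lesssim\varepsilon^{\beta_i}\|g_i\|_{\mathcal{V}^{\beta_i,p_i}_{\delta_i}}$, which follows by writing $g_i-g_i^\varepsilon=\int\omega_\varepsilon(y)[g_i-g_i(\cdot-y)]\,dy$ and applying Minkowski.

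Part 3 is the most elementary: decompose $g_1(x+h)g_2(x+h)-g_1(x)g_2(x)=g_1(x+h)[g_2(x+h)-g_2(x)]+[g_1(x+h)-g_1(x)]g_2(x)$; for $|h|<\varepsilon<\delta/4$ we have $x+h\in\Omega_{\delta/2}$ whenever $x\in\Omega_\delta$. H\"older bounds the two summands in $L^p(\Omega_\delta)$ by $\|g_1\|_{L^\infty(\Omega_{\delta/2})}\|g_2(\cdot+h)-g_2\|_{L^p(\Omega_{\delta/2})}$ and $\|g_1(\cdot+h)-g_1\|_{L^\infty(\Omega_{\delta/2})}\|g_2\|_{L^p(\Omega_{\delta/2})}$, respectively; dividing by $|h|^\beta$ and taking supremum over $|h|<\varepsilon$ yields \eqref{fe4}. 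I do not anticipate any deep obstacle; the main bookkeeping is tracking the nested subdomains, and the appearance of $\|\nabla\omega\|_{L^{p/(p-1)}}$ rather than $L^1$ in \eqref{fe1}--\eqref{fe3} is the signature of using pointwise H\"older-in-$y$ instead of Young's inequality as the bridge between the integral representation and the $L^p$-estimate.
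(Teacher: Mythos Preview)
Your proposal is correct and follows essentially the same approach as the paper: pointwise H\"older in the mollification variable for part 1 (with the mean-zero trick $\int\nabla\omega_\varepsilon=0$ for \eqref{fe2}), the Constantin--E--Titi commutator identity plus H\"older for part 2, and the product difference decomposition for part 3. Your write-up is in fact more explicit than the paper's, which merely records the key pointwise inequalities and asserts the conclusions; in particular your Fubini bookkeeping and the separate treatment of \eqref{fe1} without subtracting $f(x)$ are details the paper leaves to the reader.
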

\begin{proof}
	1) Since $\int_{\mathbb{R}^d}\nabla \ox_\varepsilon(y)dy=0$, it follows that, for a.e. $x \in \Omega_{\delta}$,  
	\begin{align*}
	|\nabla f^\varepsilon(x)|&= \Big|\int_{\mathbb{R}^d}\left[f(x-y)-f(x)\right]\nabla \ox_\varepsilon(y)dy\Big|\leq \varepsilon^{-1-\frac{d}{p}}  \left(\int_{|y|<\varepsilon}\left|f(x-y)-f(x)\right|^{p}dy\right)^{\frac{1}{p}} \|\nabla\ox\|_{L^{\frac{p}{p-1}}}.
	\end{align*}
	This yields \eqref{fe1} and \eqref{fe2}.\smallskip
%
	
	2) It is not hard to see that 
	\begin{align*}
	|(g_1g_2)^\varepsilon(x)-g_1^\varepsilon(x)g_2^\varepsilon(x)|&\leq \int_{\mathbb{R}^d}|g_1(x-y)-g_1(x)\|g_2(x-y)-g_2(x)|\omega_\varepsilon(y) dy\\&+|g_1^\varepsilon(x) -g_1(x)\|g_2^\varepsilon(x)- g_2(x)|.
	\end{align*}
Thus, using Holder's inequality, we get \eqref{fe3}. \smallskip

	3) For every $x \in \Omega_\delta$ and $h \in \R^d$ such that $|h|<\delta/2$, we have
	$$ |(fg)(x+h)-(fg)(x)| \leq |f(x+h)| |g(x+h)-g(x)| + |f(x+h)-f(x)| |g(x)|.
	$$ 	
	Clearly, this gives \eqref{fe4}. 
%
%
\end{proof}

\begin{lemma} \label{keylemma-Td} 
	~~ \smallskip	
	
	\begin{itemize}
		\item[(1)] Let $\beta\in (0,1)$ and $1 \leq p \leq \infty$. Then for any function $f: \mathbb{T}^d \to {\mathbb R}$ and  $0<\varepsilon < \delta$, there holds
		\begin{align*}
		&	\|\nabla f^\varepsilon\|_{L^p(\mathbb{T}^d)}\leq C \varepsilon^{-1}   \|f\|_{L^{p}(\mathbb{T}^d)},\\&
		\|\nabla f^\varepsilon\|_{L^p(\mathbb{T}^d)}\leq C  \varepsilon^{-1+\beta }   \|f\|_{\mathcal{V}^{\beta,p}_{\delta}(\mathbb{T}^d)}.
		\end{align*}
		\item[(2)] Let $\beta_1,\beta_2\in (0,1)$ and  $p\geq 1, p_1,p_2\geq 1$ such that $\frac{1}{p}=\frac{1}{p_1}+\frac{1}{p_2}$. Then for any functions $g_1, g_2: \mathbb{T}^d \to {\mathbb R}$ and  $0<\varepsilon<\delta$, there holds
		\begin{align*}
		&\|(g_1g_2)^\varepsilon-g_1^\varepsilon g_2^\varepsilon\|_{L^p(\mathbb{T}^d)}\leq C \varepsilon^{\beta_1+\beta_2} \|\ox\|_{L^{\frac{p}{p-1}}} \|g_1\|_{\mathcal{V}^{\beta_1,p_1}_{\delta}(\mathbb{T}^d)} \|g_2\|_{\mathcal{V}^{\beta_2,p_2}_{\delta}(\mathbb{T}^d)}.
		\end{align*}
		\item[(3)] Let $\beta \in (0,1)$  and  $p \geq 1$. Then for any functions $g_1, g_2: \mathbb{T}^d \to {\mathbb R}$ and $\varepsilon>0$, there holds
		\begin{align*}
		\|g_1 g_2\|_{\mathcal{V}^{\beta,p}_{\varepsilon}(\mathbb{T}^d)}\leq C\left(\|g_1\|_{L^{\infty}(\mathbb{T}^d)}\| g_2\|_{\mathcal{V}^{\beta,p}_{\varepsilon}(\mathbb{T}^d)}+\|g_1\|_{\mathcal{V}^{\beta,\infty}_{\varepsilon}(\mathbb{T}^d)}	\| g_2\|_{L^{p}(\mathbb{T}^d)}\right).
		\end{align*}
	\end{itemize}
\end{lemma}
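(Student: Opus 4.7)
My plan is to observe that all three parts are the torus analogues of Lemma \ref{keylemma-boundeddomain}, and since there is no boundary to avoid, each estimate simplifies: we may carry out every manipulation on all of $\mathbb{T}^d$ without restricting to subdomains $\Omega_\delta$. So conceptually one simply reruns the proof of Lemma \ref{keylemma-boundeddomain} with $\Omega_\delta$ and $\Omega_{\delta/2}$ both replaced by $\mathbb{T}^d$. Below I sketch the three steps in the order I would write them.

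For part 1 (the gradient bounds), I start from the identity $\int_{\mathbb{R}^d}\nabla\omega_\varepsilon(y)\,dy=0$, which gives, for a.e.\ $x\in\mathbb{T}^d$,
\begin{equation*}
\nabla f^\varepsilon(x)=\int_{\mathbb{R}^d}\bigl[f(x-y)-f(x)\bigr]\nabla\omega_\varepsilon(y)\,dy.
\end{equation*}
Applying Hölder's inequality in $y$ with conjugate exponents $p$ and $p/(p-1)$, together with the change of variables $z=y/\varepsilon$ in $\nabla\omega_\varepsilon$, yields the pointwise bound
\begin{equation*}
|\nabla f^\varepsilon(x)|\leq\varepsilon^{-1-d/p}\,\|\nabla\omega\|_{L^{p/(p-1)}}\Bigl(\int_{|y|<\varepsilon}|f(x-y)-f(x)|^p\,dy\Bigr)^{1/p}.
\end{equation*}
Taking the $p$-th power, integrating in $x$, switching the order of integration, and then bounding the inner integral either trivially by $2^p\|f\|_{L^p}^p$ (for the first inequality) or by $|y|^{\beta p}\|f\|_{\mathcal{V}^{\beta,p}_\delta(\mathbb{T}^d)}^p$ (for the second), gives the two claimed bounds after absorbing the $\varepsilon^d$ from the $y$-integration into the prefactor.

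For part 2 (the commutator estimate), I use the Constantin--E--Titi-type identity
\begin{equation*}
(g_1g_2)^\varepsilon(x)-g_1^\varepsilon(x)g_2^\varepsilon(x)=\int_{\mathbb{R}^d}\bigl(g_1(x-y)-g_1(x)\bigr)\bigl(g_2(x-y)-g_2(x)\bigr)\omega_\varepsilon(y)\,dy-\bigl(g_1^\varepsilon-g_1\bigr)(x)\bigl(g_2^\varepsilon-g_2\bigr)(x),
\end{equation*}
which one checks by expanding both sides. I then take $L^p$ norms in $x$: for the integral term I use Minkowski's inequality in $y$ followed by Hölder with exponents $p_1,p_2$ in $x$, producing the factor $\|g_1(\cdot-y)-g_1\|_{L^{p_1}}\|g_2(\cdot-y)-g_2\|_{L^{p_2}}\leq|y|^{\beta_1+\beta_2}\|g_1\|_{\mathcal{V}^{\beta_1,p_1}_\delta}\|g_2\|_{\mathcal{V}^{\beta_2,p_2}_\delta}$; integrating against $\omega_\varepsilon$ gains the $\varepsilon^{\beta_1+\beta_2}$ factor. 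For the boundary-of-the-identity term I use Hölder in $x$ and the elementary bound $\|g_i^\varepsilon-g_i\|_{L^{p_i}}\lesssim\varepsilon^{\beta_i}\|g_i\|_{\mathcal{V}^{\beta_i,p_i}_\delta}$, which follows from Minkowski and the definition of $\mathcal{V}^{\beta_i,p_i}_\delta$. Summing the two contributions delivers the claim (with $\|\omega\|_{L^{p/(p-1)}}$ replacing the harmless $\int\omega\,dy$ factor up to absorption in the constant).

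Part 3 (the Leibniz bound) is the easiest: for $|h|<\varepsilon$ write
\begin{equation*}
g_1(x+h)g_2(x+h)-g_1(x)g_2(x)=g_1(x+h)\bigl(g_2(x+h)-g_2(x)\bigr)+\bigl(g_1(x+h)-g_1(x)\bigr)g_2(x),
\end{equation*}
take $L^p(\mathbb{T}^d)$-norms, apply Hölder to each summand with the $L^\infty\times L^p$ split, divide by $|h|^\beta$, and take the supremum over $|h|<\varepsilon$. I do not expect a real obstacle in any of the three parts: the only subtlety worth flagging is a careful accounting of the prefactor $\|\omega\|_{L^{p/(p-1)}}$ versus $\|\omega\|_{L^1}$, but since both are finite constants depending only on the fixed mollifier, this affects neither the form nor the application of the estimates elsewhere in the paper.
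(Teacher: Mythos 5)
Your proposal is correct and follows essentially the same route as the paper, which proves this lemma by transplanting the argument for Lemma \ref{keylemma-boundeddomain} to the torus: the same vanishing-mean identity for $\nabla\omega_\varepsilon$ plus H\"older in part 1, the same Constantin--E--Titi commutator decomposition in part 2, and the same Leibniz splitting in part 3. The only caveat, which you already flag and which is harmless, is the bookkeeping of $\|\omega\|_{L^{p/(p-1)}}$ versus $\|\omega\|_{L^1}$ in the constant.
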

\begin{proof}
	The proof is similar to that of Lemma \ref{keylemma-boundeddomain} and we omit it.	
\end{proof}

\begin{lemma} \label{phi} Assume that there exists $\delta_0>0$ and $M>0$ such that
	\begin{equation} \label{Besovrho}
		\|\varrho\|_{L^\infty(0,T;\V_{\delta_0}^{\frac{1}{3},\infty}(\mathbb{T}^d))}<M.
	\end{equation}	
	For any $0<\varepsilon^{\frac{1}{3}} < \frac{1}{3M}\eta <\frac{1}{12M} \delta_0$, the followings hold.
	
	(i) If $\varrho^\varepsilon(x,t) \geq \eta $ then $\varrho(x,t) > \frac{2}{3}\eta$. In addition, for any $y \in B_{\varepsilon}(0)$, $\varrho(x-y,t)> \frac{1}{3}\eta$. 
	
	(ii) If $\varrho^\varepsilon(x,t) \leq 2\eta $ then $\varrho(x,t) < \frac{7}{3}\eta$. In addition, for any $y \in B_{\varepsilon}(0)$, $\varrho(x-y,t)< \frac{8}{3}\eta$.
\end{lemma}
\begin{proof}
From \eqref{Besovrho}, we see that
\begin{equation} \label{B-rho1}
|\varrho(x-y,t)-\varrho(x,t)| < M|y|^{\frac{1}{3}} \quad \text{for a.e. } x \in \T^d,\, y \in B_{\delta_0}(0), \; t \in (0,T). 
\end{equation}	
We write
\begin{align*}
\varrho^\varepsilon(x,t)=\int_{B_\varepsilon(0)} (\varrho(x-y,t) - \varrho(x,t)) \omega_\varepsilon(y)dy + \varrho(x,t).
\end{align*}
Therefore, if $\varrho^{\varepsilon}(x,t) \geq \eta$, by \eqref{B-rho1}, we obtain 
\begin{align*} \varrho(x,t) &\geq \varrho^{\varepsilon}(x,t) - \int_{B_\varepsilon(0)} |\varrho(x-y,t) - \varrho(x,t)| \omega_\varepsilon(y)dy \geq \eta - M \varepsilon^{\frac{1}{3}} > \frac{2}{3}\eta.
\end{align*}
This, together with \eqref{B-rho1}, implies 
$$ \varrho(x-y,t) > \varrho(x,t) - M\varepsilon^{\frac{1}{3}}>\frac{1}{3}\eta.
$$
Similarly, if $\varrho(x,t) \leq 2\eta$, by \eqref{B-rho1}, we obtain 
$$ \varrho(x,t) \leq \varrho^{\varepsilon}(x,t) + \int_{B_\varepsilon(0)} |\varrho(x-y,t) - \varrho(x,t)| \omega_\varepsilon(y)dy< 2\eta + M\varepsilon^{\frac{1}{3}} < \frac{7}{3}\eta.
$$
This, together with \eqref{B-rho1}, implies
$$ \varrho(x-y,t) \leq \varrho(x,t) + M\varepsilon^{\frac{1}{3}}<\frac{8}{3}\eta. 
$$
\end{proof}

\medskip
 \par{\bf Acknowledgements:} Q.-H. Nguyen has been supported by the Centro De Giorgi, Scuola Normale Superiore, Pisa, Italy. P.-T. Nguyen has been supported by Czech Science Foundation, project GJ19 -- 14413Y. B. Q. Tang gratefully acknowledges the support of the Hausdorff Research Institute for Mathematics (Bonn), through the Junior Trimester Program on Kinetic Theory. This work is partially supported by the International Training Program IGDK 1754 and NAWI Graz.

\end{document}